\documentclass[10pt]{amsart}

\usepackage{amssymb,epsfig}

\setlength{\voffset}{1 cm}
\setlength{\oddsidemargin}{0.5cm}
\setlength{\evensidemargin}{0.5cm}
\setlength{\textwidth}{16.0cm} \setlength{\textheight}{20.5cm}

\DeclareMathOperator{\diam}{diam}
\DeclareMathOperator{\supp}{supp}
\DeclareMathOperator{\Lip}{Lip}
\newtheorem{theorem}{Theorem}[section]
\newcommand{\bthm}{\begin{theorem}}
\newcommand{\ethm}{\end{theorem}}
\newtheorem{prop}[theorem]{Proposition}
\newcommand{\bprop}{\begin{prop}}
\newcommand{\eprop}{\end{prop}}
\newtheorem{lemma}[theorem]{Lemma}
\newtheorem{oss}[theorem]{\sc Remark}
\newcommand{\boss}{ \begin{oss}  \rm -\ }
\newcommand{\eoss}{ \end{oss} }
\newtheorem{definition}[theorem]{Definition}
\newcommand{\bd}{\begin{definition}}
\newcommand{\ed}{\end{definition}}

\newcommand{\be}{\begin{equation}}
\newcommand{\ee}{\end{equation}}
\newcommand{\ba}{\begin{array}}
\newcommand{\ea}{\end{array}}

\newcommand{\dint}{\displaystyle \int}

\newcommand{\R}{{\bf R}}
\newcommand{\N}{{\bf N}}
\newcommand{\e}{\epsilon}
\newcommand{\om}{\omega}
\newcommand{\Om}{\Omega}
\newcommand{\si}{\sigma}
\newcommand{\la}{\lambda}
\newcommand{\esp}{\kappa}
\newcommand{\osc}{\mathop{\rm osc}\limits}
\newcommand{\esssup}{\mathop{\rm ess\hspace{0.1 cm}sup}\limits}
\newcommand{\essinf}{\mathop{\rm ess\hspace{0.1 cm}inf}\limits}

\newcommand{\mis}{\mu \otimes \mathcal{L}^1}
\def\mediad#1{{\int\!\!\!\!\int\!\!\!\!\!\!-\!\!\!\!\!\!\!\!-}_{{#1}}}

\title{Harnack's inequality for parabolic De Giorgi classes in metric spaces}
\author[Kinnunen, Marola, Miranda jr. and Paronetto]{ J. Kinnunen
  \address[JK]{Department of Mathematics, P.O Box 11100, FI-00076
    Aalto University, Finland, e--mail: juha.k.kinnunen@aalto.fi},
  N. Marola\address[NM]{Department of Mathematics and Statistics,
    University of Helsinki, P.O. BOX 68, FI-00014 University of
    Helsinki, e--mail: niko.marola@helsinki.fi}, M. Miranda
  jr.\address[MM]{Dipartimento di Matematica, University of Ferrara,
    via Machiavelli 35, 44121 Ferrara, Italy, e--mail:
    michele.miranda@unife.it.  }, and
  F. Paronetto\address[FP]{Dipartimento di Matematica, Universit\`a degli 
    Studi di Padova, via Trieste 63,
    35121 Padova, Italy, e--mail: fabio.paronetto@unipd.it.}}

\date{}

\begin{document}

\begin{abstract}
  In this paper we study  problems related to parabolic partial differential
  equations in metric measure spaces equipped with a doubling measure and
  supporting a Poincar\'e inequality. We give a definition of
  parabolic De Giorgi classes and compare this notion with that of
  parabolic quasiminimizers. The main result, after proving the local
  boundedness, is a scale and location invariant Harnack inequality
  for functions belonging to parabolic De Giorgi classes.
  In particular, the results hold true for parabolic quasiminimizers.
\end{abstract}

\maketitle

{\Small
{\it MSC: 30L99, 31E05, 35K05, 35K99, 49N60}
{\it Keywords:} De Giorgi class; doubling measure; Harnack inequality;
H\"ol\-der continuity; metric space; minimizer; Newtonian space;
parabolic; Poincar\'e inequality; quasiminima, quasiminimizer.
}

\section{Introduction}

The purpose of this paper is to study problems related to the heat equation
\[
\frac{\partial u}{\partial t}-\Delta u =0
\]
in metric measure spaces equipped with a doubling measure and supporting a
Poincar\'e inequality. We consider a notion of parabolic De Giorgi classes
and parabolic quasiminimizers with quadratic structure conditions and study local regularity
properties of functions belonging to these classes. More precisely, we
show that functions in parabolic De Giorgi classes,
satisfy a scale and location invariant Harnack
inequality, see Theorem~\ref{Harnack}. Some consequences of the
parabolic Harnack inequality are the local H\"older
continuity and the strong maximum principle for the parabolic De
Giorgi classes. Our assumptions on the metric space are rather
standard to allow a reasonable first-order calculus;
the reader should consult, e.g., Bj\"orn and Bj\"orn~\cite{BjoBjo} and 
Heinonen~\cite{Hei01Lec}, and the references therein.

Harnack type inequalities play an important role in the regularity
theory of solutions to both elliptic and parabolic partial
differential equations as it implies local H\"older continuity for the
solutions. A parabolic Harnack inequality is logically stronger than
an elliptic one since the reproduction at each time of the same
harmonic function is a solution of the heat equation. There is,
however, a well-known fundamental difference between elliptic and
parabolic Harnack estimates. Roughly speaking, in the elliptic case
the information of a positive solution on a ball is controlled by the
infimum on the same ball. In the parabolic case a delay in time is
needed: the information of a positive solution at a point and at
instant $t_0$ is controlled by a ball centered at the same point but
later time $t_0+t_1$, where $t_1$ depends on the parabolic equation.

Elliptic quasiminimizers were introduced by
Giaquinta--Giusti~\cite{GiaGiu82OnT} and \cite{GiaGiu84Qua} as a tool
for a unified treatment of variational integrals, elliptic equations
and systems, and quasiregular mappings on $\R^n$. Let $\Omega \subset
\R^n$ be a nonempty open set.  A function $u \in
W^{1,p}_{\rm{loc}}(\Omega)$ is a $Q$-quasiminimizer, $Q \ge 1$,
related to the power $p$ in $\Om$ if
\[
\int_{\supp(\phi)} |\nabla u|^p\, dx
   \leq Q \int_{\supp(\phi)} |\nabla (u-\phi)|^p\, dx
\]
for all $\phi \in W^{1,p}_0(\Omega)$. Giaquinta and Giusti realized
that De Giorgi's method \cite{degiorgi3} could be extended to
quasiminimizers, obtaining, in particular, local H\"older continuity.
DiBenedetto and Trudinger~\cite{DibTru84Har} proved the Harnack
inequality for quasiminimizers.  These results were extended to metric
spaces by Kinnunen and Shanmugalingam~\cite{kin-sha}.  Elliptic
quasiminimizers enable the study of elliptic problems, such as the
$p$-Laplace equation and $p$-harmonic functions, in metric
spaces under the doubling property of the measure and a Poincar\'e inequality. 
Compared with the theory of $p$-harmonic functions we have no
partial differential equation, only the variational approach is available.
There is also no comparison principle nor uniqueness for the Dirichlet
problem for quasiminimizers. See, e.g., 
J. Bj\"orn~\cite{Bjo02Bou}, Kinnunen--Martio~\cite{KinMar03Pot},
Martio--Sbordone~\cite{MarSbo07Qua} and the references in these papers
for more on elliptic quasiminimizers.

Following Giaquinta--Giusti, Wieser~\cite{wieser} generalized the
notion of quasiminimizers to the parabolic setting in Euclidean
spaces. A function $u:\Omega \times (0,T) \to \R$, $u \in L^2_{\rm
  loc}(0,T; W^{1,2}_{\rm loc}(\Omega))$, is a parabolic
$Q$-quasiminimizer, $Q\geq 1$, for the heat equation (thus related to
the power $2$) if
\[
- \iint_{\supp(\phi)} u \frac{\partial \phi}{\partial t} \,dx \,dt +
\iint_{\supp(\phi)}\frac{|\nabla u|^2}{2}\, dx\,dt \leq Q
\iint_{\supp(\phi)}\frac{|\nabla (u-\phi)|^2}{2}\, dx\,dt
\]
for every smooth compactly supported function $\phi$ in $\Omega \times
(0,T)$.  Parabolic quasiminimizers have also been studied by
Zhou~\cite{Zho93OnT, Zho94Par}, Gianazza--Vespri~\cite{GiaVes06Par},
Marchi~\cite{Mar94Bou}, and Wang~\cite{Wan89Har}. 
The literature for parabolic quasiminimizers is very small compared to the 
elliptic case. One of the goals of this work is to introduce parabolic quasiminimizers
in metric metric spaces. This opens up a possibility to develop a systematic
theory for parabolic problems in this generality.

The present paper is using the ideas of DiBenedetto~\cite{Dib89Har} and is
based on the lecture notes~\cite{ForParVes08Dis} of the course held by
V.~Vespri in Lecce. We would like to point out that the definition for 
the parabolic De Giorgi classes given by Gianazza and Vespri~\cite{GiaVes06Par} is sligthly
different from ours, and it seems that our class is larger. 
Naturally, our abstract setting causes new
difficulties. For example, Lemma~\ref{lemmaMisVar} plays a crucial
role in the proof of Harnack's inequality. In Euclidean spaces this
abstract lemma dates back to
DiBenedetto--Gianazza-Vespri~\cite{DibGiaVes06Loc}, but as the proof
uses the linear sructure of the ambient space a new proof in the
metric setting was needed.

Motivation for this work is to consider the Saloff-Coste--Grigor'yan theorem in metric measure spaces.
Grigor'yan~\cite{Gri91The} and Saloff-Coste~\cite{Sal92ANo} observed
independently that the doubling property for the measure and the
Poincar\'e inequality are sufficient and necessary conditions for a
scale invariant parabolic Harnack inequality for the heat equation on
Riemannian manifolds. Sturm~\cite{Stu96Ana} generalized this result to
the setting of local Dirichlet spaces; such approach works also in fractal geometries, but always
when a Dirichlet form is defined. For references, see for instance 
Barlow--Bass--Kumagai~\cite{BarBasKum06Sta} and also the forthcoming 
paper by Barlow--Grigor'yan--Kumagai~\cite{BarGriKum11OnT}.   

In this paper we introduce a version of parabolic De
Giorgi classes that include parabolic quasiminimizers and show the sufficiency of the 
Saloff-Coste--Grigor'yan theorem in metric measure spaces without using Dirichlet spaces nor the Cheeger
differentiable structure \cite{cheeger}.
Very recently a similar question has been studied for degenerate parabolic quasilinear 
partial differential equations in the subelliptic case by Caponga--Citti--Rea~\cite{CaCiRe}.
Their motivating example is a class of subelliptic operators associated to a family of H\"ormander vector fields
and their Carnot--Carath\'eodory distance.
We show that the doubling property and the Poincar\'e inequality implies
a scale and location invariant parabolic Harnack inequality for functions belonging
to De Giorgi classes, and thus for parabolic quasiminimizers, in general metric measure spaces.
It would be very interesting to know whether also necessity holds in this setting for De Giorgi
classes or for parabolic quasiminimizers; using the results contained in 
Sturm~\cite{Stu98Dif,Stu98How}, it is possible to contruct a regular Dirichlet form, 
and then a diffusion process, on every locally compact metric spaces, and this, combined
with Sturm~\cite{Stu96Ana} can be used to obtain the reverse implication. This is, however, 
a very abstract result and it is based on $\Gamma$-convergence of non-local Dirichlet
forms, with no information on the limiting Dirichlet form. Such geometric
characterization via the doubling property of the measure and a
Poincar\'e inequality is not available for an elliptic Harnack
inequality, see Delmotte~\cite{Del02Gra}.

The paper is organized as follows. In Section~\ref{secPrel} we recall
the definition of Newton--Sobolev spaces and prove some preliminary
technical results; these results are general results on Sobolev 
functions and are of independent interest. 
In Section~\ref{secDgQm} we introduce the parabolic
De Giorgi classes of order 2 and define parabolic
quasiminimizers.  
In Section~\ref{secLocBdd} we prove the`
local boundedness of elements in the De Giorgi classes, and finally,
in Section \ref{secHarnack} we prove a Harnack-type inequality.

\medskip

{\bf Acknowledgements} Miranda and Paronetto visited the Aalto
University School of Science and Technology in February 2010, and
Marola visited the Universit\`a di Ferrara and Universit\`a degli
studi di Padova in September 2010, and Kinnunen visited the Universit\`a 
of Padova in January 2011. It is a pleasure to thank these universities for the hospitality and
the Academy of Finland for the financial support.
This work and the visits of Kinnunen and Marola were also partially supported 
by the 2010 GNAMPA project ``Problemi geometrici, variazionali ed evolutivi 
in strutture metriche''.

\section{Preliminaries}
\label{secPrel}

In this section we briefly recall the basic definitions and collect
some results needed in the sequel. For a more detailed treatment we
refer, for instance, to the forthcoming monograph by
A. and J. Bj\"orn~\cite{BjoBjo} and the references therein.

Standing assumptions in this paper are as follows. By the triplet
$(X,d,\mu)$ we will denote a complete metric space $X$, where $d$ is
the metric and $\mu$ a Borel measure on $X$.  The measure $\mu$ is
supposed to be \emph{doubling}, i.e., there exists a constant $c
\geq 1$ such that
\begin{equation}
\label{doubling_provv}
0<\mu(B_{2r}(x)) \leq c \mu (B_r(x))<\infty
\end{equation}
for every $r > 0$ and $x \in X$. Here $B_r(x)=B(x,r)=\{y\in X:\
d(y,x)<r\}$ is the open ball centered at $x$ with radius $r>0$.
We want to mention in passing that to require the measure of every ball in $X$ to be positive and 
finite is anything but restrictive; it does not rule out any interesting measures. 
The \emph{doubling constant} of $\mu$ is defined to be
$c_d := \inf \{c \in (1,\infty): \ \eqref{doubling_provv} \textrm{
  holds true} \}$. The doubling condition implies that for any $x\in X$, we have
\begin{equation}
\label{stime_doub_e_revdoub}
\frac{\mu(B_R(x))}{\mu(B_r(x))} \leq c_{d}\left(\frac{R}{r}\right)^{N}
= 2^{N}  \left(\frac{R}{r}\right)^{N}  ,
\end{equation}
for all $0 < r\leq R$ with $N := \log_2 c_d$. The exponent $N$ serves as a counterpart of 
dimension related to the measure. 
Moreover, the product measure in the space
$X\times (0,T)$, $T>0$, is denoted by $\mis$, where $\mathcal{L}^1$ is
the one dimensional Lebesgue measure.

We follow Heinonen and Koskela~\cite{HeiKos98Qua} in introducing
upper gradients as follows. A Borel function $g: X\to [0, \infty]$
is said to be an \emph{upper gradient} for an extended real-valued function $u$ on $X$
if for all rectifiable paths $\gamma: [0,l_{\gamma}] \to X$, we have
\begin{equation}\label{defUG}
| u(\gamma(0)) - u(\gamma(l_\gamma))| \leq \int_{\gamma} g \, ds.
\end{equation}
If \eqref{defUG} holds for $p$--almost every curve, we say that $g$ is a
$p$--weak upper gradient of $u$; here by $p$--almost
every curve we mean that \eqref{defUG} fails only for a  curve family 
$\Gamma$ with zero $p$--modulus. Recall, that the $p$--modulus
of a curve family $\Gamma$ is defined as
\[
{\rm Mod}_p\Gamma = \inf \left\{ \int_X\varrho^p\,d\mu : \varrho\geq 0\text{ is a Borel function},
\int_\gamma \varrho\geq 1
\text{ for all }\gamma \in \Gamma\right\}.
\]
From the definition, it follows
immediately that if $g$ is a $p$-weak upper gradient for $u$, then $g$
is a $p$-weak upper gradient also for $u-k$, and $|k|g$ for $ku$, for
any $k\in \R$.

The $p$-weak upper gradients were introduced in
Koskela--MacManus~\cite{KosMac98Qua}. They also showed that if $g \in
L^p(X)$ is a $p$--weak upper gradient of $u$, then, for any $\varepsilon>0$, one can 
find an upper gradients $g_\varepsilon$ of $u$ such that
$g_\varepsilon> g$ and $\|g_\varepsilon-g\|_{L^p(X)}<\varepsilon$.  
Hence for most practical purposes it is
enough to consider upper gradients instead of $p$--weak upper
gradients.  If $u$ has an upper gradient in $L^p(X)$, then it has a
unique \emph{minimal $p$-weak upper gradient} $g_u \in L^p(X)$ in the sense
that for every $p$-weak upper gradient $g \in L^p(X)$ of $u$, $g_u
\leq g$ a.e., see Corollary~3.7 in
Shan\-mu\-ga\-lin\-gam~\cite{sha-harm} and Haj\l asz~\cite{Haj03Sob}
for the case $p=1$.

Let $\Omega$ be an open subset of $X$ and $1\leq p<\infty$.
Following the definition of Shanmugalingam~\cite{sha}, we define for
$u\in L^p(\Omega)$,
\[
\|u\|^p_{N^{1,p}(\Omega)}:=\|u\|^p_{L^p(\Omega)}+ \inf\|g\|^p_{L^{p}(\Omega)},
\]
where the infimum is taken over all upper gradients of $u$.  The
\emph{Newtonian space} $N^{1,p}(\Omega)$ is the quotient space
\[
N^{1,p}(\Omega)=\left\{
u\in L^{p}(\Omega): \|u\|_{N^{1,p}(\Omega)}<\infty
\right\} /{\sim},
\]
where $u\sim v$ if and only if $\|u-v\|_{N^{1,p}(\Omega)}=0$. 
If $u,v\in N^{1,p}(X)$ and $v=u$ $\mu$-almost everywhere, then $u\sim v$.
Moreover, if $u\in N^{1,p}(X)$, then $u\sim v$ if and only if $u=v$
outside a set of zero $p$-capacity.
The space $N^{1,p}(\Omega)$ is a Banach space
(see Shanmugalingam~\cite[Theorem 3.7]{sha} and it is easily
verified that it has a lattice structure. A function $u$ belongs to the \emph{local
Newtonian space} $N^{1,p}_{\rm{loc}}(\Omega)$ if $u\in N^{1,p}(V)$
for all bounded open sets $V$ with $\overline{V}\subset\Omega$, the
latter space being defined by considering $V$ as a metric space with
the metric $d$ and the measure $\mu$ restricted to it.

Newtonian spaces share many properties of the classical Sobolev
spaces.  For example, if $u,v \in N^{1,p}_{\rm{loc}}(\Omega)$, then
$g_u=g_v$ a.e.\ in $\{x \in \Omega : u(x)=v(x)\}$, in particular
$g_{\min\{u,c\}}=g_u \chi_{\{u \neq c\}}$ for $c \in \R$.

We shall also need a Newtonian space with zero boundary values; for the detailed
definition and main properties we refer to Shanmugalingam~\cite[Definition 4.1]{sha-harm}. For a
measurable set $E\subset X$, let
\[
N_0^{1,p}(E) = \{f|_E: f\in N^{1,p}(X) \textrm{ and } f= 0\; p\textrm{--a.e. on
} X\setminus E\}.
\]
The notion of $p$--a.e. is based on the notion of sets of null $p$--capacity; the
$p$--capacity of a set $E$ can be defined as
\[
{\rm Cap}_p E=\inf\left\{ \| u\|_{N^{1,p}(X)}^p: u\in N^{1,p}(X), u\geq 1 \mbox{ on }E\right\}. 
\]
This space equipped with the norm inherited from $N^{1,p}(X)$ is a
Banach space.

We shall assume that $X$ supports a \emph{weak $(1,2)$-Poincar\'e inequality},
that is there exist constants $C_2>0$ and $\Lambda \geq 1$ such that for
all balls $B_\rho \subset X$, all integrable functions $u$ on $X$ and
all upper gradients $g$ of $u$,
\begin{equation}\label{PoiIn}
{\int\!\!\!\!\!\!-}_{B_{\rho}} |u-u_{B_{\rho}}|\, d\mu
\leq C_2 \rho \left(
{\int\!\!\!\!\!\!-}_{B_{\Lambda\rho}}
g^2 \, d\mu \right)^{1/2},
\end{equation}
where
\[
u_B:= {\int\!\!\!\!\!\!-}_B u\, d\mu := \frac1{\mu(B)}\int_B u\, d\mu.
\]
It is noteworthy that by a result of Keith and
Zhong~\cite{KeiZho08Poi} if a complete metric space is equipped with a
doubling measure and supports a weak $(1,2)$-Poincar\'e inequality,
then there exists $\varepsilon > 0$ such that the space admits a weak
$(1,p)$-Poincar\'e inequality for each $p>2-\varepsilon$. We shall
use this fact in the proof of Lemma~\ref{lemma2} which is crucial for
the proof of a parabolic Harnack inequality. For more detailed references
of Poincar\'e inequality, see Heinonen--Koskela~\cite{HeiKos98Qua} and 
Haj{\l}asz--Koskela~\cite{HajKos00Sob}. In particular, in the latter it has been
shown that if a weak $(1,2)$--Poincar\'e inequality is assumed, then
the Sobolev embedding theorem holds true and so a weak $(q,2)$--Poincar\'e inequality
holds for all $q\leq 2^*$, where, for a fixed exponent $p$ we have defined
\begin{equation}
p^\ast=\begin{cases}
\dfrac{pN}{N-p}, & p< N, \\
\mbox{ } & \\
+\infty, & p \ge N.
\end{cases}
\end{equation}

In addition, we have that if $u\in N^{1,2}_0(B_\rho)$, $B_\rho\subset \Omega$,
then the following Sobolev--type inequality is valid
\begin{equation}
\label{sobolev1}
\left( {{\int\!\!\!\!\!\!-}_{B_{\rho}}} |u|^{q} \,d\mu \right)^{1/q} \leq c_\ast \, 
\rho \, \left( {{\int\!\!\!\!\!\!-}_{B_{\rho}}} g_u^2 \,d\mu  \right)^{1/2},
\qquad  1\leq q\leq 2^*;
\end{equation}
for a proof of this fact we refer to Kinnunen--Shanmugalingam~\cite[Lemma 2.1]{kin-sha}.
The crucial fact here for us is that $2^\ast>2$. We also point out that since
$u\in N^{1,2}_0(B_\rho)$, then the balls in the previuous inequality have the same
radius. The fact that a weak $(1,p)$--Poincar\'e inequality
holds for $p>2-\varepsilon$ implies also the following Sobolev--type inequality
\begin{equation}
\label{sobolev2}
\left( {{\int\!\!\!\!\!\!-}_{B_{\rho}}} |u|^{q} \,d\mu \right)^{1/q} \leq C_p \, 
\rho \, \left( {{\int\!\!\!\!\!\!-}_{B_{\rho}}} g^p \,d\mu  \right)^{1/p},
\qquad  1\leq q\leq p^\ast,
\end{equation}
for any function $u$ with zero boundary values and any $g$ upper
gradient of $u$. The constant $c_\ast$ depends only on $c_d$ and on the constants
in the weak $(1,2)$--Poincar\'e inequality.

We also point out that requiring a Poincar\'e inequality implies
in particular the existence of ``enough'' rectifiable curves; this
also implies that the continuous embedding $N^{1,2} \to L^2$, given
by the identity map, is not onto. 

We now state and prove some results that are needed in the paper; these results
are stated for functions in $N^{1,2}$, but can be easily generalized to any
$N^{1,p}$, $1\leq p<+\infty$ if we assumed instead a weak $(1,p)$--Poincar\'e inequality.

\bthm
\label{sob-poin-bis}
Assume $u\in N_0^{1,2}(B_\rho)$, $0<\rho< \diam(X)/3$ (any $\rho>0$ in case $X$ 
has infinite diameter); then there
exist $\esp > 1$ such that we have
\[
{\int\!\!\!\!\!\!-}_{B_\rho} |u|^{2\esp} \, d\mu \leq c_\ast^2\rho^2
\left( {\int\!\!\!\!\!\!-}_{B_\rho} |u|^2\, d\mu \right)^{\esp-1}
	 {\int\!\!\!\!\!\!-}_{B_{\Lambda\rho}} g_u^2\,d\mu.
\]
\ethm

\begin{proof}
  Let $\esp=2-2/2^\ast$, where $2^\ast$ is as in the Sobolev inequality
  \eqref{sobolev1}. By H\"older's inequality and \eqref{sobolev1}, we
  obtain the claim
\[
\begin{split}
{\int\!\!\!\!\!\!-}_{B_{\rho}}|u|^{2\esp} \,d\mu
&\le\left( {\int\!\!\!\!\!\!-}_{B_{\rho}}|u|^2 \,d\mu \right)^{\esp-1}
\left( {\int\!\!\!\!\!\!-}_{B_{\rho}}|u|^{2^\ast} \,d\mu \right)^{2/2^\ast}
\\
&\le c_\ast^2\rho^2\left( {\int\!\!\!\!\!\!-}_{B_{\rho}}|u|^2 \,d\mu \right)^{\esp-1}
{\int\!\!\!\!\!\!-}_{B_{\Lambda\rho}} g_u^2\, d\mu.
\end{split}
\]
\end{proof}

By integrating the previous inequality in time, we obtain a parabolic
Sobolev inequality.

\bprop
\label{cor-sob-poin-bis}
Assume $u \in C([s_1,s_2]; L^2(X)) \cap L^2(s_1,s_2;
N_0^{1,2}(B_\rho))$. Then there exists $\esp > 1$ such that
\[
\int_{s_1}^{s_2}\!\!\! {{\int\!\!\!\!\!\!-}_{B_{\rho}}} |u|^{2\esp} \, d\mu \, dt
\leq c_\ast^2\rho^2  \left( \sup_{t\in(s_1,s_2)} {{\int\!\!\!\!\!\!-}_{B_{\rho}}}
|u(x,t)|^{2} \, d\mu(x) \right)^{\esp-1}\int_{s_1}^{s_2}\!\!\! {{\int\!\!\!\!\!\!-}_{B_{\rho}}}
g_u^2 \, d\mu \, dt.
\]
\eprop

We shall also need the following De Giorgi-type lemma.

\begin{lemma}
\label{lemma2.2}
Let $p>2-\varepsilon$ and $1\leq q \leq p^\ast$; moreover let $k, l
\in \R$ with $k < l$, and $u \in N^{1,2}(B_{\rho})$. Then
$$
(l-k)  \mu(\{u \leq k \} \cap B_\rho)^{1/q}  \mu (\{ u > l \}
\cap B_{\rho})^{1/q} \leq
 2C_p\rho  \mu(B_\rho)^{2/q-1/p}
 \left( \int_{\{ k < u < l \}\cap B_{\Lambda\rho}} g_u^p \, d\mu \right)^{1/p}.
$$
\end{lemma}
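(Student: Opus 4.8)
The plan is to isolate the set where $u$ is strictly between the two levels by means of a suitable truncation, and then apply the Sobolev-type inequality \eqref{sobolev2} to that truncation. Concretely, set
\[
v := \min\{l, \max\{u, k\}\} - k,
\]
so that $0 \le v \le l-k$ on $B_\rho$, with $v = 0$ on $\{u \le k\} \cap B_\rho$ and $v = l-k$ on $\{u \ge l\} \cap B_\rho$. By the lattice structure of the Newtonian space and the property $g_{\min\{u,c\}} = g_u \chi_{\{u\neq c\}}$ recalled in the preliminaries, $v \in N^{1,2}(B_\rho)$ and its minimal weak upper gradient satisfies $g_v = g_u \chi_{\{k < u < l\}}$ a.e.

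The difficulty is that $v$ does \emph{not} have zero boundary values on $B_\rho$, so \eqref{sobolev2} cannot be applied to $v$ directly; the standard remedy is to pass to a smaller ball. I would fix a concentric ball $B_{\rho}$ and work with $w := v - v_{B_\rho}$ on $B_\rho$, but this still is not in $N_0^{1,2}$. The cleaner route, which is the one I expect the authors take, is to use the \emph{Sobolev--Poincar\'e} form of \eqref{sobolev2}: for $u \in N^{1,2}(B_\rho)$ and $1 \le q \le p^\ast$,
\[
\left( {\int\!\!\!\!\!\!-}_{B_\rho} |v - v_{B_\rho}|^q \, d\mu \right)^{1/q}
\le C_p \, \rho \left( {\int\!\!\!\!\!\!-}_{B_{\Lambda\rho}} g_v^p \, d\mu \right)^{1/p},
\]
which follows from the weak $(1,p)$-Poincar\'e inequality valid for $p > 2-\varepsilon$ together with the doubling property, exactly as \eqref{sobolev2} does. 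Now estimate the left-hand side from below. On $A_k := \{u \le k\} \cap B_\rho$ we have $v = 0$, and on $A_l := \{u \ge l\} \cap B_\rho$ we have $v = l-k$; since $v_{B_\rho} \in [0, l-k]$, at least one of the inequalities $|0 - v_{B_\rho}| \ge \tfrac{l-k}{2}$ or $|(l-k) - v_{B_\rho}| \ge \tfrac{l-k}{2}$ holds — but to get a product of the two measures one argues more symmetrically: for any $x \in A_k$ and $y \in A_l$,
\[
l - k = v(y) - v(x) \le |v(y) - v_{B_\rho}| + |v(x) - v_{B_\rho}|,
\]
so integrating in $x$ over $A_k$ and in $y$ over $A_l$ and using $(a+b)^q \le 2^{q-1}(a^q+b^q)$ together with a Chebyshev-type splitting gives
\[
(l-k)\, \mu(A_k)^{1/q}\, \mu(A_l)^{1/q}
\le 2 \left( \int_{B_\rho} |v - v_{B_\rho}|^q \, d\mu \right)^{1/q}
\]
up to bookkeeping of the constant; the factor $2$ matches the statement. (This is the classical De Giorgi isoperimetric-type lemma adapted to the two-parameter measure product.)

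Combining the two displays, writing the averaged integral as $\mu(B_\rho)^{-1/q}$ times the plain integral, and substituting $g_v = g_u \chi_{\{k<u<l\}}$ on the right, yields
\[
(l-k)\, \mu(A_k)^{1/q}\, \mu(A_l)^{1/q}
\le 2 C_p \, \rho \, \mu(B_\rho)^{1/q} \, \mu(B_{\Lambda\rho})^{-1/p}
\left( \int_{\{k<u<l\}\cap B_{\Lambda\rho}} g_u^p \, d\mu \right)^{1/p},
\]
and then the doubling inequality \eqref{stime_doub_e_revdoub} lets one replace $\mu(B_\rho)^{1/q}\mu(B_{\Lambda\rho})^{-1/p}$ by a constant times $\mu(B_\rho)^{2/q - 1/p}$ (absorbing the $\Lambda$-dependent doubling factor into $C_p$), which is exactly the claimed form. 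The main obstacle, and the only genuinely non-routine point, is the passage through the Sobolev--Poincar\'e inequality to handle the lack of zero boundary values; everything else is the standard truncation-plus-Chebyshev manipulation and a doubling-measure comparison of ball masses.
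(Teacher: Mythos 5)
Your proposal is correct and follows essentially the same route as the paper: the same truncation $v=\min\{l,\max\{u,k\}\}-k$, the same passage through the $(q,p)$ Sobolev--Poincar\'e inequality applied to $v-v_{B_\rho}$ (the paper extracts the product of the two measures slightly differently, via $|v_{B_\rho}|^q\mu(\{u\le k\}\cap B_\rho)\le\int_{B_\rho}|v-v_{B_\rho}|^q\,d\mu$ and the triangle inequality in $L^q$, rather than your symmetric double integration over $A_k\times A_l$, but this is a cosmetic variant). Note only that your intermediate display is missing a factor $\mu(B_\rho)^{1/q}$ on the right (the double integration actually yields $(l-k)^q\mu(A_k)\mu(A_l)\le 2^q\mu(B_\rho)\int_{B_\rho}|v-v_{B_\rho}|^q\,d\mu$), and this is precisely the factor needed to produce the exponent $2/q-1/p$ at the end without invoking doubling, since $\mu(B_{\Lambda\rho})^{-1/p}\le\mu(B_\rho)^{-1/p}$.
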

\boss
The previous result holds in every open set $\Omega \subset X$, provided that
\eqref{sobolev1} holds with $\Omega$ in place of $B_{\rho}$.
\eoss

\begin{proof}
Denote $A=\{x \in B_\rho: u(x) \leq k \}$; if $\mu(A)=0$, the result is immediate, otherwise, if $\mu(A)>0$, 
we define
$$
v:=\left\{
\ba{ll}
\min \{u , l \} - k,	&	\textrm{if } u > k,	\\
0,			&	\textrm{if } u \leq k .
\ea
\right.
$$
We have that
$$
\int_{B_\rho} |v - v_{B_\rho}|^q \, d\mu =
\int_{B_\rho\setminus A} |v - v_{B_\rho}|^q \, d\mu + \int_{A} |v_{B_\rho}|^q \, d\mu
\geq |v_{B_\rho}|^q \mu (A)
$$
and consequently
\begin{equation}
\label{misuradiA}
|v_{B_\rho}|^q \leq \frac{1}{\mu (A)}  \int_{B_\rho} |v - v_{B_\rho}|^q \, d\mu.
\end{equation}
On the other hand, we see that
\begin{equation}
\begin{split}
\int_{B_\rho} |v|^q \, d\mu
&= \int_{\{ u > l \}\cap B_\rho} (l-k)^q \, d\mu +
\int_{\{ k < u \leq l \}\cap B_\rho} |v|^q \, d\mu
\\
&\geq  (l-k)^q \mu(\{ u > l \}\cap B_\rho),
\end{split}
\end{equation}
and using \eqref{misuradiA}, we obtain
\[
\begin{split}
\left(\int_{B_\rho} |v|^q \, d\mu \right)^{1/q}
&\leq\left( \int_{B_\rho} |v - v_{B_\rho}|^q \, d\mu \right)^{1/q} +
\left( |v_{B_\rho}|^q \mu(B_\rho)\right)^{1/q}
\\
&\leq 2  \left( \frac{\mu(B_\rho)}{\mu(A)} \int_{B_\rho} |v - v_{B_\rho}|^q \,
d\mu \right)^{1/q} .
\end{split}
\]
By \eqref{sobolev2} and the doubling property, we finally conclude that
\[
(l-k) \mu (\{ u > l \} \cap B_{\rho})^{1/q}\leq
2C_p\rho \frac{\mu(B_\rho)^{2/q-1/p}} {\mu(A)^{1/q}}
	\left( \int_{B_{\Lambda\rho}} g_v^p  \, d\mu \right)^{1/p}  ,
\]
which is the required inequality.
\end{proof}

The following measure-theoretic lemma is a generalization of a result
obtained in \cite{DibGiaVes06Loc} to the metric space setting. Roughly
speaking, the lemma states that if the set where $u\in N^{1,1}_{\rm
  loc}(X)$ is bounded away from zero occupies a good piece of the ball
$B$, then there exists at least one point and a neighborhood about
this point such that $u$ remains large in a large portion of the
neighborhood. In other words, the set where $u$ is positive clusters
about at least one point of the ball $B$.

\begin{lemma}\label{lemmaMisVar}
Let $x_0\in X$, $\rho_0 > \rho>0$ with $\mu (\partial
B_{\rho}(x_0))=0$ and $\alpha,\beta > 0$. Then, for every
$\lambda,\delta\in (0,1)$ there exists $\eta\in (0,1)$ such that for
every $u\in N^{1,2}_{\rm loc}(X)$ satisfying
$$
\int_{B_{\rho_0}(x_0)} g^2_u \,d\mu \leq \beta  \frac{\mu(B_{\rho_0}(x_0))}{\rho^2_0},
$$
and
$$
\mu(\{ u>1\} \cap B_\rho(x_0))\geq \alpha \mu(B_\rho(x_0)),
$$
there exists $x^\ast\in B_\rho(x_0)$ with $B_{\eta \rho}(x^*)\subset B_\rho(x_0)$ and
$$
\mu(\{ u>\lambda \}\cap B_{\eta\rho}(x^\ast))>(1-\delta)\mu(B_{\eta\rho}(x^\ast)).
$$
\end{lemma}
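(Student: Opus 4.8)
The plan is to argue by contradiction, replacing the Euclidean dyadic--cube decomposition of \cite{DibGiaVes06Loc} by a covering adapted to the doubling structure. Fix $\lambda,\delta\in(0,1)$ and suppose that, for a given $\eta\in(0,1)$ to be chosen at the end, the conclusion fails: then for \emph{every} $x$ with $d(x,x_0)\le(1-\eta)\rho$, so that $B_{\eta\rho}(x)\subset B_\rho(x_0)$, one has $\mu(\{u\le\lambda\}\cap B_{\eta\rho}(x))\ge\delta\,\mu(B_{\eta\rho}(x))$. Applying Lemma~\ref{lemma2.2} with $p=q=2$ (admissible since $2<2^\ast$), $k=\lambda$, $l=1$, on the ball $B_{\eta\rho}(x)$, using the density just stated to absorb the factor $\mu(\{u\le\lambda\}\cap B_{\eta\rho}(x))^{1/2}$, and squaring, one obtains
\[
\mu(\{u>1\}\cap B_{\eta\rho}(x))\ \le\ \frac{C\,\eta^{2}\rho^{2}}{(1-\lambda)^{2}\delta}\int_{B_{\Lambda\eta\rho}(x)}g_u^{2}\,d\mu ,
\]
with $C$ depending only on the constant in Lemma~\ref{lemma2.2}.

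Next I cover the bulk of $B_\rho(x_0)$. Pick a maximal $\eta\rho$--separated set $\{x_i\}\subset B_{(1-\eta)\rho}(x_0)$; then the balls $B_{\eta\rho}(x_i)$ cover $B_{(1-\eta)\rho}(x_0)$, the balls $B_{\eta\rho/2}(x_i)$ are pairwise disjoint, and the doubling property bounds the number of indices $i$ for which a fixed point lies in $B_{\Lambda\eta\rho}(x_i)$ by a constant $M=M(c_d,\Lambda)$. Choosing $\eta$ small enough that $(1-\eta)\rho+\Lambda\eta\rho<\rho_0$ ensures every $B_{\Lambda\eta\rho}(x_i)\subset B_{\rho_0}(x_0)$, so summing the previous inequality over $i$, using the bounded overlap, and then the energy hypothesis gives
\[
\mu\bigl(\{u>1\}\cap B_{(1-\eta)\rho}(x_0)\bigr)\ \le\ \sum_i\mu\bigl(\{u>1\}\cap B_{\eta\rho}(x_i)\bigr)\ \le\ \frac{C\,M\,\eta^{2}\rho^{2}}{(1-\lambda)^{2}\delta}\int_{B_{\rho_0}(x_0)}g_u^{2}\,d\mu\ \le\ \frac{C\,M\,\beta}{(1-\lambda)^{2}\delta}\,\frac{\eta^{2}\rho^{2}}{\rho_0^{2}}\,\mu(B_{\rho_0}(x_0)).
\]

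It remains to bound the left side from below. Since $\mu(B_\rho(x_0))<\infty$ and $B_{(1-\eta)\rho}(x_0)$ increases to $B_\rho(x_0)$ as $\eta\downarrow0$, we may further shrink $\eta$ — uniformly in $u$ — so that $\mu\bigl(B_\rho(x_0)\setminus B_{(1-\eta)\rho}(x_0)\bigr)\le\tfrac{\alpha}{2}\mu(B_\rho(x_0))$; the measure--density hypothesis then forces $\mu(\{u>1\}\cap B_{(1-\eta)\rho}(x_0))\ge\tfrac{\alpha}{2}\mu(B_\rho(x_0))$. Dividing through by $\mu(B_\rho(x_0))$ and estimating $\mu(B_{\rho_0}(x_0))/\mu(B_\rho(x_0))\le c_d(\rho_0/\rho)^N$ via \eqref{stime_doub_e_revdoub}, we arrive at
\[
\frac{\alpha}{2}\ \le\ \frac{C\,M\,\beta\,c_d}{(1-\lambda)^{2}\delta}\,\Bigl(\frac{\rho_0}{\rho}\Bigr)^{N-2}\eta^{2},
\]
whose right-hand side tends to $0$ as $\eta\to0$, with a constant depending only on $c_d,\Lambda,\alpha,\beta,\lambda,\delta$ and the ratio $\rho_0/\rho$ — never on $u$. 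Fixing $\eta$ small enough to violate this inequality yields the desired contradiction, and this $\eta$ serves for every admissible $u$.

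I expect the covering step to be the crux: in \cite{DibGiaVes06Loc} it rests on the linear structure (translates of dyadic cubes), whereas here it must be carried out with balls of comparable radius whose $\Lambda$--dilates both cover enough of $B_\rho(x_0)$ and have overlap controlled purely in terms of $c_d$ and $\Lambda$, while still lying inside $B_{\rho_0}(x_0)$ — this last constraint being exactly what forces $\eta$ to be small relative to $\rho_0/\rho-1$. Everything else is a direct application of the isoperimetric inequality of Lemma~\ref{lemma2.2} together with bookkeeping of constants.
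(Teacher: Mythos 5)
Your proof is correct and follows essentially the same route as the paper's: argue by contradiction, cover the ball by balls of radius $\eta\rho$ whose $\Lambda$-dilates lie in $B_{\rho_0}(x_0)$ and have bounded overlap, apply Lemma~\ref{lemma2.2} with $q=2$, $k=\lambda$, $l=1$ on each, sum against the energy bound, and let $\eta\to 0$. The only differences are cosmetic simplifications: you avoid the paper's splitting of the covering into the index sets $I^{+}$ and $I^{-}$ by feeding the density $\delta$ directly into the denominator via Lemma~\ref{lemma2.2}, and by approximating $B_\rho(x_0)$ from the inside by $B_{(1-\eta)\rho}(x_0)$ (continuity from below) rather than from the outside, you never actually need the hypothesis $\mu(\partial B_{\rho}(x_0))=0$.
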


\boss 
The assumption $\mu(\partial B_{\rho}(x_0))=0$ is not
restrictive, since this property holds except for at most countably many
radii $\rho>0$ and we can choose the appropriate radius $\rho$ as we
like. We also point out that the two previous lemmas can also be
stated for functions of bounded variation instead of Sobolev
functions, once a weak $(1,1)$--Poincar\'e inequality is assumed; 
the proofs given here can be easily adapted to this case by
using the notion of the perimeter.  
\eoss

\begin{proof}
For every $\eta<(\rho_0-\rho)/(2\Lambda\rho)$, we may consider a
finite family of disjoint balls $\{B_{\eta\rho}(x_i)\}_{i\in I}$,
$x_i\in B_{\rho}(x_0)$ for every $i\in I$, $B_{\eta\rho}(x_i)\subset B_\rho(x_0)$, 
such that
$$
B_{\rho}(x_0) \subset \bigcup_{i\in I}B_{2\eta\rho}(x_i)
\subset B_{\rho_0}(x_0) .
$$
Observe that $B_{2\Lambda\eta\rho}(x_i)\subset B_{\rho_0}(x_0)$ for every $i\in I$
and by the doubling property, the balls $B_{2\Lambda\eta\rho}(x_i)$
have bounded overlap with bound independent of $\eta$.
We denote
$$
I^+=\left\{i\in I: \mu(\{u>1\} \cap B_{2\eta\rho}(x_i))>\frac{\alpha}{2c_d}\mu(B_{2\eta\rho}(x_i))\right\}
$$
and
$$
I^-=\left\{i\in I: \mu(\{u>1\} \cap B_{2\eta\rho}(x_i))\leq \frac{\alpha}{2c_d}
\mu(B_{2\eta\rho}(x_i))\right\}.
$$
By assumption, we get
\begin{align*}
\alpha \mu(B_\rho(x_0))&
\leq \mu(\{u>1\}\cap B_\rho(x_0)) \\
&\leq
\sum_{i\in I^+}\mu(\{u>1\}\cap B_{2\eta\rho}(x_i))
+\frac{\alpha}{2c_d} \sum_{i\in I^-}\mu(B_{2\eta\rho}(x_i)) \\
&\leq
\sum_{i\in I^+}\mu(\{u>1\}\cap B_{2\eta\rho}(x_i))
+\frac{\alpha}{2} \sum_{i\in I^-}\mu(B_{\eta\rho}(x_i)) \\
&\leq
\sum_{i\in I^+}\mu(\{u>1\}\cap B_{2\eta\rho}(x_i))
+\frac{\alpha}{2}\mu(B_{(1+\eta)\rho}(x_0))
\end{align*}
and consequently
\begin{equation}\label{esti1}
\frac{\alpha}{2}\left(
\mu(B_\rho(x_0))-\mu(B_{(1+\eta)\rho}(x_0)\setminus B_\rho(x_0))
\right)
\leq
\sum_{i\in I^+}\mu(\{u>1\}\cap B_{2\eta\rho}(x_i)).
\end{equation}
Assume by contradiction that
\begin{equation}\label{estDelta}
\mu(\{u>\lambda\}\cap B_{\eta\rho}(x_i))\le(1-\delta)\mu(B_{\eta\rho}(x_i)),
\end{equation}
for every $i\in I^+$;
this clearly implies that
$$
\frac{\mu(\{u\leq \lambda\}\cap B_{\eta\rho}(x_i))}{\mu(B_{\eta\rho}(x_i))}\geq \delta.
$$
The doubling condition on $\mu$ also implies that
$$
\frac{\mu(\{u\leq \lambda\}\cap B_{2\eta\rho}(x_i))}{\mu(B_{2\eta\rho}(x_i))}\geq \frac{\delta}{c_d}.
$$
By Lemma \ref{lemma2.2} with $q=2$, $k=\lambda$ and $l=1$,
we obtain that
\begin{align}\label{estOmega}
\nonumber
\frac{\delta}{c_d}\mu(\{u>1\}\cap B_{2\eta\rho}(x_i)) \leq &
\frac{\mu(\{u\leq \lambda\}\cap B_{2\eta\rho}(x_i))}{\mu(B_{2\eta\rho}(x_i))}
\mu(\{u>1\}\cap B_{2\eta\rho}(x_i)) \\
\leq &
\frac{16C_2^2\eta^2 \rho^2}{(1-\lambda)^2} \int_{\{\lambda<u< 1\}\cap B_{2\Lambda\eta\rho}(x_i)}g_u^2 \,d\mu.
\end{align}
Summing up over $I^+$ and using the bounded overlapping property, from \eqref{esti1} we get
\begin{align*}
\frac{\alpha}{2}(1-\lambda)^2\frac{\delta}{c_d}&\left(
\mu(B_\rho(x_0))-\mu(B_{(1+\eta)\rho}(x_0)\setminus B_\rho(x_0) \right) \\
& \leq 16C_2^2  \eta^2  \rho^2  \sum_{i\in I^+}
\int_{\{\lambda \leq u < 1\}\cap B_{2\Lambda\eta\rho}(x_i)}g^2_u\,d\mu \\
&\leq c' \eta^2 \rho^2 \int_{B_{\rho_0}(x_0)}g^2_u\,d\mu\\
&\leq c'\beta  \mu(B_{\rho_0}(x_0)) \eta^2,
\end{align*}
where the costant $c'$ is given by $16C_2^2$ multiplied by the overlapping constant.
The conclusion follows by passing to the limit with $\eta\to 0$, since
the condition $\mu(\partial B_\rho(x_0))=0$ implies that the left hand side
of the previous equation tends to
$$
\frac{\alpha}{2}(1-\lambda)^2\frac{\delta}{c_d}\mu(B_\rho(x_0)).
$$
\end{proof}

We conclude with a result which will be needed later; for the proof we
refer, for instance, to \cite[Lemma 7.1]{giusti}.

\begin{lemma}
\label{lemmuzzo}
Let $\{y_h\}_{h=0}^\infty$ be  a sequence of positive real numbers such that
\[
y_{h+1} \leq c  b^h y_h^{1+\alpha},
\]
where $c>0$, $b>1$ and $\alpha > 0$. Then if $y_0 \leq c^{-1/\alpha} b
^{-1/\alpha^2}$, we have
\[
\lim_{h\to \infty} y_h = 0.
\]
\end{lemma}

\section{Parabolic De Giorgi classes and quasiminimizers}
\label{secDgQm}

We consider a variational approach related to the heat
equation (see Definition \ref{def_of_sol})
\begin{equation}
\label{heatequation}
\frac{\partial u}{\partial t} - \Delta u = 0 \hskip30pt \textrm{in } \Omega \times (0,T)
\end{equation}
and provide a Harnack inequality for a class of functions in a metric
measure space generalizing the known result for positive solutions of
\eqref{heatequation} in the Euclidean case. The following definition is essentially
based on the approach of DiBenedetto--Gianazza--Vespri~\cite{DibGiaVes06Loc} and also of Wieser~\cite{wieser}; 
we refer also to the book of Lieberman~\cite{Lieberman} for a more detailed description.

\bd[Parabolic De Giorgi classes of order 2]
\label{classiDG}
Let $\Omega$ be a non-empty open subset of $X$ and $T>0$. A function
$u : \Omega \times (0,T) \to \R$ belongs to the class $DG_+(\Omega, T,
\gamma)$, if
\[
u \in C([0,T]; L^2_{\rm loc}(\Omega)) \cap L^2_{\rm loc}(0,T; N^{1,2}_{\rm loc}(\Omega)),
\]
and for all $k\in\R$ the following energy estimate holds 
\begin{align}
\label{DGgamma}
\sup_{t \in (\tau,s_2)} & \int_{B_{r}(x_0)} (u-k)_+^2 (x,t) \, d \mu +
\int_{\tau}^{s_2}\!\!\!\!  \int_{B_{r}(x_0)} g_{(u-k)_+}^2 \, d\mu\,
ds \leq \alpha \int_{B_{R}(x_0)} (u-k)_+^2 (x,s_1) \, d \mu(x) \\
& \qquad +
\gamma\left( 1 + \frac{1-\alpha}{\theta}\right) \frac1{(R-r)^2}
\int_{s_1}^{s_2}\!\!\!\!\int_{B_{R}(x_0)} (u-k)_+^2 \,d\mu\, ds,
\nonumber
\end{align}
where $(x_0,t_0) \in \Omega \times (0,T)$, and $\theta>0$, $0<r<R$,
$\alpha \in [0,1]$, $s_1, s_2 \in (0,T)$, and $s_1 < s_2$ are so that
\[
\tau, t_0 \in [s_1,s_2],
\quad s_2 - s_1 = \theta R^2,
\quad\tau - s_1 = \theta (R-r)^2,
\]
and $B_R(x_0) \times (t_0 - \theta R^2, t_0 + \theta R^2) \subset
\Omega \times (0,T)$. The function $u$ belongs to $DG_-(\Omega, T,
\gamma)$ if \eqref{DGgamma} holds with $(u-k)_+$ replaced by
$(u-k)_-$.
The function $u$ is said to belong to the parabolic De Giorgi
class of order 2, denoted $DG(\Omega, T, \gamma)$, if
\[
u \in DG_+(\Omega, T, \gamma) \cap DG_-(\Omega, T, \gamma).
\]
\ed

In what follows, the estimate \eqref{DGgamma} given in
Definition~\ref{classiDG} is referred to as \emph{energy estimate} or
\emph{Caccioppoli-type estimate}.  We also point out that our
definition of parabolic De Giorgi classes of order 2 is sligthly
different from that given in the Euclidean case by
Gianazza--Vespri~\cite{GiaVes06Par}; our classes seem to be larger,
but it is not known to us whether they are equivalent.

Denote $\mathcal{K}(\Omega \times (0,T))=\{ K \subset \Omega
\times (0,T): K \textrm{ compact} \}$ and consider the functional
$$
E: L^2(0,T; N^{1,2}(\Omega)) \times \mathcal{K}(\Omega \times (0,T)) \to \R, \qquad
E(w, K) = \frac{1}{2}  \iint_K g_w^2 \,d\mu\,dt.
$$

\bd[Parabolic quasiminimizer] \label{def:parabolicQM} Let $\Omega$ be
an open subset of $X$.  A function 
$$
u \in L^2_{\rm loc}(0,T;N^{1,2}_{\rm loc}(\Omega))
$$ 
is said to be a parabolic
$Q$-quasiminimizer, $Q\geq 1$, related to the heat equation
\eqref{heatequation} if \be \label{Qminpar} -\iint_{\supp(\phi)} u
\frac{\partial \phi}{\partial t} \,d\mu\,dt + E(u, \supp(\phi)) \leq
QE(u-\phi,\supp(\phi)) \ee for every $\phi \in$ $\Lip_c(\Omega \times
(0,T))=\{f\in\Lip(\Omega \times (0,T)): \supp(f)\Subset \Omega \times
(0,T)\}$.  \ed

In the Euclidean case with the Lebesgue measure it can be shown that
$u$ is a weak solution of \eqref{heatequation} if and only if $u$ is a
$1$-quasiminimizer for \eqref{heatequation}, see \cite{wieser}.  Hence
$1$-quasiminimizers can be seen as weak solutions of
\eqref{heatequation} in metric measure spaces.  This motivates the
following definition.

\bd
\label{def_of_sol}
A function $u$ is a parabolic minimizer if $u$ is a parabolic $Q$-quasiminimizer
with $Q=1$.  \ed

We also point out that the class of $Q$-quasiminimizers is non-empty
and non-trivial, since it contains the elliptic $Q$-quasiminimizers
as defined in \cite{GiaGiu82OnT,GiaGiu84Qua} and as shown there, there
exist many other examples as well.

\boss
It is possible to prove, by using the Cheeger differentiable structure and
the same proof contained in Wieser \cite[Section 4]{wieser}, that 
a parabolic $Q$--quasiminimizer belongs to a suitable parabolic De~Giorgi class. We are
not able to prove this result directly without using the Cheeger differentiable 
structure; the main problem is that the map $u\mapsto g_u$ is only 
sublinear and not linear, and linearity is a main tool used in the argument.
\eoss

\section{Parabolic De Giorgi classes and local boundedness}
\label{secLocBdd}

We shall use the following notation;
$$
Q_{\rho, \theta}^+(x_0,t_0) = B_{\rho}(x_0) \times [t_0, t_0
  + \theta \rho^2),
$$
$$
Q_{\rho, \theta}^-(x_0,t_0) = B_{\rho}(x_0) \times (t_0 -
  \theta \rho^2, t_0],
$$
$$
Q_{\rho, \theta}(x_0,t_0) = B_{\rho}(x_0) \times (t_0 -
  \theta \rho^2, t_0 + \theta \rho^2).
$$ 
When $\theta=1$ we shall simplify the notation by writing $Q_{\rho}^+(x_0,t_0) =Q_{\rho, 1}^+(x_0,t_0)$,
$Q_{\rho}^-(x_0,t_0) =Q_{\rho, 1}^-(x_0,t_0)$ and $Q_{\rho} (x_0,t_0) =Q_{\rho, 1}(x_0,t_0)$.

We shall show that functions belonging to $DG(\Omega,T,\gamma)$
are locally bounded.  Here we follow the analogous proof contained in
\cite{kin-sha} for the elliptic case.  Consider $r, R > 0$ such that
$R/2 < r < R$, $s_1, s_2 \in (0,T)$ with $2(s_2 - s_1) = R^2$ and
$\sigma \in (s_1, s_2)$ such that $\sigma < (s_1+s_2)/2$, fix $x_0 \in
X$.  We define level sets at scale $\rho>0$ as follows
\[
A(k;\rho; t_1,t_2) := \{ (x,t) \in B_{\rho}(x_0) \times (t_1,t_2): u
(x,t) > k \}.
\]
Let $\tilde r:= (R+r)/2$, i.e., $R/2 < r < \tilde
r < R$, and $\eta\in \Lip_c(B_{\tilde r})$ such that $0\leq\eta\leq
1$, $\eta=1$ on $B_r$, and $g_\eta \leq 2/(R-r)$. Then
$v=(u-k)_+\eta\in N_0^{1,2}(B_{\tilde r})$ and $g_v\leq g_{(u-k)_+} +
2(u-k)_+/(R-r)$. We have
\begin{align*}
& \mediad{B_{r}\times(\sigma,s_2)}(u-k)_+^2\,d\mu\,dt \leq 2^N\mediad{B_{\tilde r}\times(\sigma,s_2)}(u-k)_+^2\eta^2\,d\mu\,dt \\
  & \leq \frac{2^N}{\mis (B_{\tilde r} \times (\sigma, s_2))}
  \left(\int\!\!\!\!\int_{B_{\tilde r}\times (\sigma,s_2)} (u-k)_+^q\eta^q
    \,d\mu\,dt \right)^{2/q}
  \left( \mis (A(k; \tilde r; \sigma, s_2)\right)^{(q-2)/q}		\\
  & \leq 2^N\left( \frac{\mis (A(k;\tilde r;\sigma,s_2))}{\mis (B_{\tilde r}
      \times (\sigma,s_2))} \right)^{(q-2)/q}
  \left(\mediad{B_{\tilde r}\times (\sigma,s_2)} (u-k)_+^q\eta^q\, d\mu\, dt
  \right)^{2/q} .
\end{align*}
We now use Proposition \ref{cor-sob-poin-bis} taking $q=2\esp$. We get  
\begin{align*}
\mediad{B_r\times (\sigma,s_2)} & (u-k)_+^2\,d\mu\, dt \leq 2^N2^{2/\esp}
  c_{\ast}^{2/\esp}r^{2/\esp} \left(\frac{\mis
      (A(k;\tilde r;\sigma,s_2))}{\mis (B_{\tilde r} \times (\sigma,s_2))}
  \right)^{(\esp-1)/\esp} \\
  & \times \left( \sup_{t\in(\sigma,s_2)}
    {{\int\!\!\!\!\!\!-}_{B_{\tilde r}}}(u-k)_+^2\eta^2\, d\mu
    \right)^{(\esp-1)/\esp} \left(\mediad{B_{\tilde r}\times (\sigma,s_2)}
      g_{(u-k)_+\eta}^2 \,d\mu\,dt\right)^{1/\esp}.
\end{align*}
By applying \eqref{DGgamma} with $\tau = \sigma$, $\alpha = 0$, and
$\theta = 1/2$, since $\esp>1$, we arrive at
\begin{align*}
  \mediad{B_{r}\times(\sigma,s_2)} & (u-k)_+^2 \,d\mu\,dt \leq
  2^{N+2}\frac{c_{\ast}^{2/\esp}r^{2/\esp}}{(s_2-\sigma)^{1/\esp}} \left(
    \frac{\mis (A(k;\tilde r;\sigma,s_2))}{\mis (B_{\tilde r} \times
      (\sigma,s_2))}
  \right)^{(\esp-1)/\esp} \\
  & \quad \times \left(\sup_{t\in(\sigma,s_2)}
    {{\int\!\!\!\!\!\!-}_{B_{\tilde r}}} (u-k)_+^2(x,t)\, d\mu
  \right)^{(\esp-1)/\esp}
  \left(2\int_{\sigma}^{s_2}\!\!\! {{\int\!\!\!\!\!\!-}_{B_{\tilde r}}} g_{(u-k)_+}^2\,d\mu\,dt\right. \\
  & \qquad\qquad + \left. \frac{8}{(R-r)^2}\int_{\sigma}^{s_2}\!\!\! {{\int\!\!\!\!\!\!-}_{B_{\tilde r}}}(u-k)_+^2\,d\mu\,dt\right)^{1/\esp} \\
  \leq & 2^{N+2}\frac{c_{\ast}^{2/\esp} r^{2/\esp}}{(s_2-\sigma)^{1/\esp}}
  \left( \frac{\mis (A(k;\tilde r;\sigma,s_2))}{\mis (B_{\tilde r} \times
      (\sigma,s_2))}
  \right)^{(\esp-1)/\esp}\frac{\mu(B_R)}{\mu(B_{\tilde r})} \\
  & \qquad \times \frac{6\gamma+2^{2N+3}}{(R-r)^2} \int_{s_1}^{s_2}\!\!\!
  {{\int\!\!\!\!\!\!-}_{B_{R}}} (u-k)_+^2 \,d\mu\,dt .
\end{align*}
By the the choice of $\sigma$, we see that $(s_2 - \sigma)^{-1} < 2 (s_2-s_1)^{-1}$,
and consequently
\begin{align} \label{bounded1}
\mediad{B_{r}\times (\sigma,s_2)} & (u-k)_+^2\,d\mu\,dt
\leq 2^{N+4}(3\gamma+2^{2N+2})c_{\ast}^{2/\esp} r^{2/\esp}
\left( \frac{\mis (A(k;\tilde r;\sigma,s_2))}{\mis (B_{\tilde r} \times
(\sigma,s_2))} \right)^{(\esp-1)/\esp} \\
& \qquad \times \frac{\mu(B_R)}{\mu(B_{\tilde r})}(s_2-\sigma)^{(\esp-1)/\esp}
\frac{1}{(R-r)^2} \mediad{B_{R}\times (s_1,s_2)} (u-k)_+^2 \,d\mu\,dt \nonumber
\end{align}
Consider $h < k$. Then
\begin{align*}
(k-h)^2 & \left( \mis (A(k;\tilde r;\sigma,s_2))\right) \leq
	\int\!\!\!\!\int_{A(k;\tilde r;\sigma,s_2)} (u-h)_+^2\,d\mu\,dt \\
& \leq \int\!\!\!\!\int_{A(h;\tilde r;\sigma,s_2)} (u-h)_+^2  \,d\mu\,dt =
	\int_{\sigma}^{s_2}\!\!\!\!\int_{B_{\tilde r}} (u-h)_+^2 \,d\mu\,dt,
\end{align*}
from which, using the doubling property \eqref{stime_doub_e_revdoub},
it follows that
\begin{align} \label{bounded2}
  \mis (A(k;\tilde r;\sigma,s_2)) & \leq \frac{1}{(k-h)^2} \left(\mis\left(
      B_{\tilde r} \times (\sigma, s_2)
    \right) \right) u(h;\tilde r;\sigma,s_2)^2 \\
  & \leq \frac{2^{N+1}}{(k-h)^2} \left(\mis\left( B_{\tilde r} \times
      (\sigma, s_2) \right) \right) u(h;R;s_1,s_2)^2, \nonumber,
\end{align}
where 
\[
u(l;\rho;t_1,t_2) := \left( \mediad{B_{\rho}\times (t_1,t_2)}
(u-l)_+^2 d\,\mu\,dt\right)^{1/2}.
\]
By plugging \eqref{bounded2} into \eqref{bounded1} and arranging terms
we arrive at
\begin{equation} \label{l'unicachecito}
u(k;r;\sigma,s_2)\leq
\bar c\frac{r^{1/\esp}(s_2-\sigma)^{(\esp-1)/2\esp}}{(k-h)^{(\esp-1)/\esp}(R-r)}u(k;R;s_1,s_2)
u(h;R;s_1,s_2)^{(\esp-1)/\esp},
\end{equation}
with $\bar c = 2^{N+2+(N+1)(\esp-1)/(2\esp)}(3\gamma+2^{2N+2})^{1/2}c_{\ast}^{1/\esp}$. 

Let us consider the following sequences: for $n \in \N$, $k_0\in \R$
and fixed $d$ we define
\begin{align*}
  & k_n := k_0 + d \left( 1 - \frac{1}{2^n} \right) \nearrow k_0 + d, \\
  & r_n := \frac{R}{2} + \frac{R}{2^{n+1}}		\searrow \frac{R}{2},\quad\textrm{and}	\\
  & \sigma_n := \frac{s_1+s_2}{2} - \frac{R^2}{4^{n+1}} \nearrow
  \frac{s_1+s_2}{2}.
\end{align*}
This is possible since $2(s_2 - s_1) = R^2$. The following technical
result will be useful for us.

\begin{lemma} \label{lemmuzzoinduttivo}
Let $u_0 := u(k_0; R; s_1, s_2)$, $u_n := u(k_n;r_n;\sigma_n,s_2)$,
\[
\theta := \frac{\esp-1}\esp,\quad a := \frac{1+\theta}\theta = 1 +
\frac\esp{\esp-1},
\]
and
\[
d^{\theta} = 
\bar c\,2^{1+\theta/2 + a(1+\theta)}u_0^\theta,
\]
where $\bar c$ is the constant in \eqref{l'unicachecito}. Then
\begin{equation}
\label{decadimento}
u_{n} \leq \frac{u_0}{2^{an}}.
\end{equation}
\end{lemma}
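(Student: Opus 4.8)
The plan is to prove \eqref{decadimento} by induction on $n$, using the recursive estimate \eqref{l'unicachecito} together with Lemma~\ref{lemmuzzo} applied to the rescaled sequence $y_n := u_n/u_0$. First I would specialize \eqref{l'unicachecito} to the chosen sequences $k_n \nearrow k_0+d$, $r_n \searrow R/2$, $\sigma_n \nearrow (s_1+s_2)/2$. Note that by construction $r_n \le R$, so $u(k_{n+1};R;s_1,s_2) \le u(k_n;r_n;\sigma_n,s_2) = u_n$ by monotonicity of the truncation in the level and of the average over nested domains; similarly the role of $h$ in \eqref{l'unicachecito} will be played by $k_n$ and the role of $k$ by $k_{n+1}$, while the radii contract from $R$ to $r_{n+1}$ and the time interval from $(\sigma_n, s_2)$ — here one must check that the domains in \eqref{l'unicachecito} genuinely nest, i.e.\ that applying the inequality with $(r,R)$ replaced by $(r_{n+1}, r_n)$ and $\sigma$ by $\sigma_{n+1}$ is legitimate, which it is since $R/2 < r_{n+1} < r_n \le R$ and $\sigma_{n+1} > \sigma_n$.

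The key quantitative step is then to track the powers of $2$. We have $k_{n+1}-k_n = d/2^{n+1}$, so $(k_{n+1}-k_n)^{-(\esp-1)/\esp} = (2^{n+1}/d)^{\theta}$; we have $r_n - r_{n+1} = R/2^{n+2}$, so $(r_n-r_{n+1})^{-1} = 2^{n+2}/R$; the factor $r_{n+1}^{1/\esp} \le R^{1/\esp}$ and $(s_2-\sigma_{n+1})^{(\esp-1)/(2\esp)}$ is bounded by a fixed multiple of $(s_2-s_1)^{\theta/2} = (R^2/2)^{\theta/2}$ after absorbing constants; the radii $R$ cancel against the powers $R^{1/\esp}\cdot R^{-1}\cdot R^{\theta}$ since $1/\esp + \theta = 1$. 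Collecting everything and writing $y_n = u_n/u_0$, the inequality \eqref{l'unicachecito} becomes, after absorbing all numerical constants into $\bar c\,2^{1+\theta/2}$ and using $u_n \ge u_{n+1}$-type bounds to replace $u(k_{n+1};R;s_1,s_2)$ by $u_n$ (hence by $u_0 y_n$) and $u(k_n;R;s_1,s_2)$ by $u_0 y_{n-1}$ or directly by $u_n$ again,
\[
y_{n+1} \le \bar c\,2^{1+\theta/2}\,u_0^{\theta}\, (4^{\,n})\, y_n^{1+\theta},
\]
i.e.\ a recursion of the exact form required by Lemma~\ref{lemmuzzo} with $c = \bar c\,2^{1+\theta/2}u_0^{\theta}$, $b = 4$, $\alpha = \theta$. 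One then checks that the stated choice $d^{\theta} = \bar c\,2^{1+\theta/2 + a(1+\theta)}u_0^{\theta}$ is precisely what makes $y_0 = 1 \cdot$ (the normalization) small enough — more carefully, one works with the shifted sequence so that the smallness threshold $c^{-1/\alpha}b^{-1/\alpha^2}$ of Lemma~\ref{lemmuzzo} is met, and the constant $a = (1+\theta)/\theta$ emerges as the geometric decay rate $b^{-1/\alpha^2} = 4^{-1/\theta^2}$ repackaged through $a$; conclude $y_n \to 0$ and, more precisely, that $y_n \le 2^{-an}$, which is \eqref{decadimento}.

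The main obstacle I anticipate is bookkeeping rather than conceptual: one must be scrupulous that the substitution of \eqref{l'unicachecito} with the indexed radii and times is valid at each step (the nesting $B_{r_{n+1}}\times(\sigma_{n+1},s_2)\subset B_{r_n}\times(\sigma_n,s_2)\subset B_R\times(s_1,s_2)$ and the requisite inclusion of the enlarged ball $B_{\Lambda r}$ inside the domain, which is guaranteed since all radii stay below $R$), and that every stray power of $R$, of $(s_2-s_1)$, and of $2$ is accounted for so that the final constant is exactly the $\bar c\,2^{1+\theta/2+a(1+\theta)}$ appearing in the hypothesis on $d$. The verification that the chosen $d$ meets the smallness condition of Lemma~\ref{lemmuzzo} — equivalently, that with this $d$ one has $y_0 \le c^{-1/\theta}4^{-1/\theta^2}$ after the normalization — is the one place where the precise value of the exponent $a = 1 + \esp/(\esp-1)$ is forced, and getting that algebra to close cleanly is the crux.
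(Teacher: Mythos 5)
Your outline correctly identifies the recursive structure coming from \eqref{l'unicachecito} and the cancellation of the $R$ and $(s_2-s_1)$ powers, but the route you propose — rescale to $y_n = u_n/u_0$ and invoke Lemma~\ref{lemmuzzo} — cannot deliver the stated conclusion. Lemma~\ref{lemmuzzo} only asserts $\lim_{h\to\infty} y_h = 0$; it gives no rate, so it cannot by itself yield the quantitative bound $y_n \le 2^{-an}$ that Lemma~\ref{lemmuzzoinduttivo} claims. To get the explicit geometric decay you would have to re-derive the decay estimate inside the proof of Lemma~\ref{lemmuzzo}, at which point you are in effect repeating what the paper does directly: plug the induction hypothesis $u_n \le u_0 2^{-an}$ into the recursion and use the algebraic identity $(1-a)(1+\theta) = -a$ together with the specific choice of $d$ to see that the upper bound collapses to exactly $u_0 2^{-a(n+1)}$. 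The paper never invokes Lemma~\ref{lemmuzzo} here (it is used later, in Proposition~\ref{prop-DeGiorgi1}); the proof of Lemma~\ref{lemmuzzoinduttivo} is a self-contained one-step induction.

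There is also a concrete numerical slip. From $k_{n+1}-k_n = d\,2^{-(n+1)}$ and $r_n - r_{n+1} = R\,2^{-(n+2)}$ one gets, after absorbing the $R$-dependence,
\[
u_{n+1} \le c''\,\frac{2^{(n+1)(1+\theta)}}{d^\theta}\,u_n^{1+\theta},
\qquad c'' = 2^{1+\theta/2}\,\bar c\, ,
\]
so the base in the geometric factor is $b = 2^{1+\theta}$, not $4$. (For the rescaled sequence one has $c = 2^{-a}$, $b = 2^{1+\theta}$, and then indeed $c\,b^{1/\theta} = 2^{-a + a} = 1$, which is the borderline case for Lemma~\ref{lemmuzzo}; this is another sign that you should not rely on that lemma, which only handles the strict inequality qualitatively, but rather propagate the exact bound by induction.) With the corrected base and a direct induction the argument closes cleanly; as written, your appeal to Lemma~\ref{lemmuzzo} leaves the precise estimate \eqref{decadimento} unproved.
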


\begin{proof}
We prove the lemma by induction.
First notice that \eqref{decadimento} is true for $n=0$.
Then assume that \eqref{decadimento} is true for fixed $n \in \N$.
In \eqref{l'unicachecito}, we first estimate
$r^{1/\esp}(s_2-\sigma)^{(\esp-1)/2\esp}$ by
$R^{1/\esp}(s_2-s_1)^{(\esp-1)/2\esp}$.
Then we replace $r$ with $r_{n+1}$, $R$ with $r_n$,
$\sigma$ with $\sigma_{n+1}$, $s_1$ with $\sigma_{n}$,
$h$ with $k_n$, and $k$ with $k_{n+1}$.
With these replacements we arrive at
\[
u_{n+1} \leq \frac{\bar c\,R^{1/\esp}(s_2-s_1)^{(\esp-1)/(2\esp) }}
{ (k_{n+1}-k_n)^{(\esp-1)/\esp}(r_{n}-r_{n+1}) }
u_n^{1 + (\esp-1)/\esp} .
\]
Denote $c' := \bar c\, R^{1/\esp}(s_2-s_1)^{(\esp-1)/(2\esp)}$ so that
we have
\[
u_{n+1} \leq \frac{c'u_n^{1 + \theta}}{(k_{n+1}-k_n)^{\theta}(r_{n}-r_{n+1})}.
\]
Since $r_n - r_{n+1} = 2^{-(n+2)}R$ and
$k_{n+1} - k_n = 2^{-(n+1)} d$, we obtain
\[
u_{n+1} \leq c'\frac{2^{(n+1)\theta+n+2}}{d^{\theta}R}u_n^{1 + \theta}
	= 2c'\frac{2^{(n+1)(1+\theta)}}{d^{\theta}R}
	u_n^{1 + \theta}
	\leq 2c'\frac{2^{(n+1)(1+\theta)}}{d^{\theta}R}
	 \left(\frac{u_0}{2^{an}}\right)^{1 + \theta}.
\]
As $2(s_2-s_1)=R^2$, we have
\begin{equation*}
c'' : = \frac{2c'}{R} = 2\bar c\, R^{1/\esp}(s_2-s_1)^{(\esp-1)/(2\esp)}\frac{1}{R}
= 2^{1+\theta/2}\bar c.
\end{equation*}
Point being that the constant $c''$ is independent of $R$, $s_1$, and $s_2$.  Finally, since $(1-a)(1+\theta) = - a$ we arrive at
\[
u_{n+1} \leq c''\frac{2^{(n+1)(1+\theta)}}{d^{\theta}}\left(\frac{u_0}{2^{an}} \right)^{1 + \theta} = 2^{-a (n+1)} u_0 .
\]
This completes the proof.
\end{proof}

Before proving the main result of this section, we need the following
proposition.

\begin{prop}\label{Prop4.2}
For every number $k_0 \in \R$ we have
\[
u(k_0 + d; R/2; (s_1+s_2)/2, s_2) = 0,
\]
where $d$ is defined as in Lemma~\ref{lemmuzzoinduttivo}.
\end{prop}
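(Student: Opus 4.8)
The plan is to read the statement off from the geometric decay \eqref{decadimento} established in Lemma~\ref{lemmuzzoinduttivo}. Since $a = 1 + \esp/(\esp-1) > 1$, that lemma gives $u_n \le u_0/2^{an} \to 0$ as $n \to \infty$. It then remains only to recognize $u(k_0+d; R/2; (s_1+s_2)/2, s_2)$ as a quantity dominated by $u_n$ for every $n$.

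First I would exploit the monotonicity of the three auxiliary sequences, each moving in the direction that shrinks the relevant super-level set and the integration domain: $k_n \le k_0 + d$, $B_{R/2}(x_0) \subset B_{r_n}(x_0)$, and $((s_1+s_2)/2, s_2) \subset (\sigma_n, s_2)$ for every $n \in \N$. Hence $(u-(k_0+d))_+ \le (u-k_n)_+$ pointwise, and the cylinder $B_{R/2}(x_0) \times ((s_1+s_2)/2, s_2)$ is contained in $B_{r_n}(x_0) \times (\sigma_n, s_2)$. Passing from the normalized integrals defining $u(\cdot;\cdot;\cdot,\cdot)$ to ordinary integrals and using the uniform bound $\mis(B_{r_n}(x_0) \times (\sigma_n, s_2)) \le \mis(B_R(x_0) \times (s_1, s_2))$, this yields
\[
\mis\big(B_{R/2}(x_0) \times ((s_1+s_2)/2, s_2)\big)\, u(k_0+d; R/2; (s_1+s_2)/2, s_2)^2 \le \mis\big(B_R(x_0) \times (s_1, s_2)\big)\, u_n^2
\]
for every $n$.

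Finally I would let $n \to \infty$. The right-hand side tends to $0$ by Lemma~\ref{lemmuzzoinduttivo}, while the left-hand side does not depend on $n$ and carries the factor $\mis(B_{R/2}(x_0) \times ((s_1+s_2)/2, s_2))$, which is strictly positive and finite by the standing doubling assumption together with $(s_1+s_2)/2 < s_2$. Therefore $u(k_0+d; R/2; (s_1+s_2)/2, s_2) = 0$. I do not expect any genuine obstacle here: the argument is a bookkeeping passage to the limit, and the only points demanding (minor) care are checking that all three sequences indeed move so as to decrease both the truncation level and the domain, and that the normalizing measure factors remain bounded so that the decay of $u_n$ is not undone when one clears denominators.
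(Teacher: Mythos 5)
Your proof is correct and follows essentially the same route as the paper: compare $u(k_0+d;\,R/2;\,(s_1+s_2)/2,\,s_2)$ to $u_n$ by monotonicity of level, ball and time interval, absorb the ratio of the normalizing measures into a constant bounded independently of $n$ (the paper simply cites the doubling property and writes the factor as $2^{N+1}$), and then let $n\to\infty$ using Lemma~\ref{lemmuzzoinduttivo}. The only substantive difference is that you spell out the clearing of denominators rather than quoting doubling directly, which is a harmless elaboration.
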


\begin{proof}
Since $k_n \leq k_0 + d$, $R/2 \leq r_n \leq R$,
$s_1 \leq \sigma_n \leq (s_1+s_2)/2$,
the doubling property implies that
\[
0 \leq u(k_0 + d; R/2; (s_1+s_2)/2, s_2) \leq 2^{N+1}
u(k_n; r_n; \sigma_n, s_2)=u_n.
\]
By Lemma \ref{lemmuzzoinduttivo}, we have $\lim_{n\to\infty} u_n = 0$ and the
claim follows.
\end{proof}

We close this section by proving local boundedness for functions in the De
Giorgi class.

\begin{theorem}
\label{Linfinity}
Suppose $u \in DG(\Omega,T,\gamma)$. Then there is a constant $c_\infty$
depending only on $c_d$, $\gamma$, and the constants in the weak $(1,2)$--Poincar\'e 
inequality, such that for all $B_{R} \times
(s_1,s_2) \subset \Omega \times (0,T)$, we have
\[
\esssup_{B_{R/2} \times ((s_1+s_2)/2, s_2)} |u|
\leq c_\infty \left( \mediad{B_{R}\times (s_1,s_2)} |u|^2 d \mu\, dt \right)^{1/2} .
\]
\end{theorem}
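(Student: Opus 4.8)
The plan is to apply the iteration machinery developed in this section to the functions $(u-k)_+$ and $(u-k)_-$ separately, and then combine the two one-sided bounds. First I would fix a ball-cylinder $B_R(x_0)\times(s_1,s_2)\subset\Omega\times(0,T)$, and — by rescaling if necessary — reduce to the normalization $2(s_2-s_1)=R^2$ used throughout the section, so that the sequences $k_n,r_n,\sigma_n$ and Lemma~\ref{lemmuzzoinduttivo} and Proposition~\ref{Prop4.2} are all available. Applying Proposition~\ref{Prop4.2} with the choice $k_0=0$ gives $u(d;R/2;(s_1+s_2)/2,s_2)=0$, i.e.\ $u\le d$ a.e.\ on $Q:=B_{R/2}(x_0)\times((s_1+s_2)/2,s_2)$, where by the definition of $d$ in Lemma~\ref{lemmuzzoinduttivo} we have
\[
d=\bigl(\bar c\,2^{1+\theta/2+a(1+\theta)}\bigr)^{1/\theta}u_0,\qquad
u_0=\Bigl(\mediad{B_R\times(s_1,s_2)}(u)_+^2\,d\mu\,dt\Bigr)^{1/2}.
\]
Since $(u)_+^2\le u^2$, this yields $\esssup_Q u\le c_\infty\bigl(\mediad{B_R\times(s_1,s_2)}|u|^2\,d\mu\,dt\bigr)^{1/2}$ with a constant $c_\infty$ depending only on the data in $\bar c$ — that is, on $c_d$, $\gamma$, $\esp$ (hence the Poincar\'e constants) — and crucially independent of $R,s_1,s_2,x_0$, which is exactly the point emphasized after the computation of $c''$.

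The second half is to obtain the matching lower bound. I would observe that the entire apparatus of this section — the Caccioppoli estimate \eqref{DGgamma} for $(u-k)_-$, hence the analogue of \eqref{l'unicachecito}, Lemma~\ref{lemmuzzoinduttivo}, and Proposition~\ref{Prop4.2} — applies verbatim to $-u$ in place of $u$, because $u\in DG(\Omega,T,\gamma)$ means $u\in DG_-(\Omega,T,\gamma)$ and $(u-k)_-=(-u-(-k))_+$. Running the same argument for $-u$ with $k_0=0$ gives $\esssup_Q(-u)\le c_\infty\bigl(\mediad{B_R\times(s_1,s_2)}|u|^2\,d\mu\,dt\bigr)^{1/2}$ with the same constant. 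Combining the two one-sided estimates gives the claimed bound on $\esssup_Q|u|$.

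A few routine points need to be checked along the way: that the level-set quantities $u(l;\rho;t_1,t_2)$ are finite (which follows from $u\in C([0,T];L^2_{\rm loc})$, so $u^2$ is integrable on the relevant compact cylinders), that the restriction $R/2<r<R$ and the constraint $2(s_2-s_1)=R^2$ imposed at the start of the section do not cost generality (a parabolic scaling $t\mapsto s_1+\lambda(t-s_1)$ together with the freedom in choosing $R$ reduces the general cylinder to this normalized shape, and the De Giorgi class is stable under such scalings with $\gamma$ unchanged), and that the essential supremum over the open cylinder $Q$ is controlled by the $u(k_n;r_n;\sigma_n,s_2)$ as in Proposition~\ref{Prop4.2}. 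I expect the main obstacle to be purely bookkeeping: tracking how the normalization $2(s_2-s_1)=R^2$ interacts with an arbitrary given cylinder, and verifying that every constant produced by Lemma~\ref{lemmuzzoinduttivo} and Proposition~\ref{Prop4.2} genuinely depends only on $c_d$, $\gamma$, and the Poincar\'e data, so that the resulting $c_\infty$ is scale- and location-invariant. No genuinely new estimate is required beyond what the section already provides.
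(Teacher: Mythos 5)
Your proposal is correct and follows essentially the same route as the paper: apply Proposition~\ref{Prop4.2} with $k_0=0$ to get $\esssup u\le d\le c_\infty\bigl(\mediad{B_R\times(s_1,s_2)}|u|^2\,d\mu\,dt\bigr)^{1/2}$, then repeat for $-u$ (which lies in the same De Giorgi class) and combine. Your extra remark on reducing a general cylinder to the normalization $2(s_2-s_1)=R^2$ is a point the paper leaves implicit, and is a reasonable addition.
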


\begin{proof}
The Proposition \ref{Prop4.2} implies that
\[
\esssup_{B_{R/2} \times ((s_1+s_2)/2, s_2)} u \leq k_0 + d,
\]
where $d$ is defined in Lemma \ref{lemmuzzoinduttivo}. Then
\[
\esssup_{B_{R/2} \times ((s_1+s_2)/2, s_2)} u \leq k_0 +
	c_\infty \left( \mediad{B_{R}\times (s_1,s_2)} (u-k_0)_+^2 d \mu\, dt \right)^{1/2},
\]
with $c_\infty=\bar c^{1/\theta}\, 2^{1+\theta/2+a(1+\theta)}$,  $\bar c$ the constant in \eqref{l'unicachecito}.
The previous inequality with $k_0 = 0$ can be written as follows
\[
\esssup_{B_{R/2} \times ((s_1+s_2)/2, s_2)} u \leq
	c_\infty \left( \mediad{B_{R}\times (s_1,s_2)} u_+^2 d \mu\, dt \right)^{1/2}
	\leq c_\infty \left( \mediad{B_{R}\times (s_1,s_2)} |u|^2 d \mu\, dt \right)^{1/2}.
\]
Since also $-u \in DG(\Omega,T,\gamma)$ the analogous
argument applied to $-u$ gives the claim.
\end{proof}

\section{Parabolic De Giorgi classes and Harnack inequality}
\label{secHarnack}

In this section we shall prove a scale-invariant parabolic Harnack inequality for functions in the De Giorgi
class of order 2 and, in particular, for parabolic quasiminimizers.

\bprop
\label{prop-DeGiorgi1}
Let $\rho,\,\theta > 0$ be
chosen such that the cylinder $Q_{\rho, \theta}^-(y,s)\subset\Omega
\times (0,T)$.  Then for each choice of $a, \sigma \in (0,1)$ and
$\bar\theta \in (0, {\theta})$, there is $\nu_+$, depending only on
$N, \gamma, c_{\ast}, a, \theta, \bar\theta$,
such that for every $u \in DG_+(\Omega, T, \gamma)$
and $m_+$ and $\om$ for which
$$
m_+ \geq \esssup_{Q_{\rho, \theta}^-(y,s)} u
\quad\textrm{and}\quad
\om \geq \osc_{Q_{\rho, \theta}^-(y,s)} u,
$$
the following claim holds true: if
\[
\mis \left( \{ (x,t) \in Q_{\rho, \theta}^-(y,s): u(x,t) > m_+ -\sigma \om \} \right)
	\leq \nu_+  \mis \left( Q_{\rho, \theta}^-(y,s) \right),
\]
then
$$
u(x,t) \leq m_+ - a  \sigma  \om \hskip30pt
\mis\text{-a.e. in } B_{\rho/2}(y)\times (s-\bar\theta \rho^2,s].
$$
\eprop

\begin{proof}
Define the following sequences, $h \in \N$,
\begin{align*}
\rho_h & := \frac{\rho}{2} + \frac{\rho}{2^{h+1}} \searrow \frac{\rho}{2}\, ,	\quad
	\theta_h = \bar{\theta} + \frac{1}{2^h} (\theta - \bar\theta) \searrow \bar\theta\, , 	\\
B_h & := B_{\rho_h}(y),	\quad 
 	s_h := s - \theta_h \rho^2 \nearrow s - \bar\theta\rho^2 \, , \quad
	Q_h^- := B_h \times (s_h, s],									\\
\sigma_h & := a \sigma + {\frac{1 - a}{2^h}} \sigma \searrow
  a\sigma,\quad \textrm{and}\quad k_h = m_+ - \sigma_h \om \nearrow
  m_+-a\sigma\om.
\end{align*}
Consider a sequence of Lipschitz continuous functions $\zeta_h$, $h \in \N$, satisfying the following:
\begin{align*}
& \zeta_h \equiv 1 \text{ in } Q_{h+1}^- \, , \qquad 
	\zeta_h \equiv 0 \text{ in } Q^-_{\rho,\theta}(y, s) \setminus {Q}^-_{h}				\\
& g_{\zeta_h} \leq \frac{1}{\rho_h - \rho_{h+1}} = \frac{2^{h+2}}{\rho} \, ,
	\hskip30pt 0 \leq 
	(\zeta_h)_t \leq \frac{2^{h+1}}{\theta - \bar\theta} \, \frac{1}{\rho^2}	\, .
\end{align*}
Denote $A_h := \{ (x,t) \in Q_h^- : u(x,t) > k_h \}$. We have
\begin{align*}
\int\!\!\!\!\int_{Q_{h}^-} & (u-k_h)_+^2 \zeta^2_h \, d\mu \, dt \geq
\int\!\!\!\!\int_{Q_{h+1}^-} (u-k_h)_+^2 \, d\mu \, dt \geq
\int\!\!\!\!\int_{A_{h+1}} (u-k_h)_+^2 \,d\mu\,dt \\
& \geq \int\!\!\!\!\int_{A_{h+1}}(k_{h+1}-k_h)^2 \,d\mu \, dt =
{\frac{((1-a)\sigma\om)^2}{2^{2h+2}}}\mis({A_{h+1}}),
\end{align*}
and consequently
\be \label{primo}
\int_{s_{h}}^s \!\! {{\int\!\!\!\!\!\!-}_{\!\!\!B_{h}}}
(u-k_h)_+^2 \zeta^2_h \, d\mu \, dt \geq \frac{((1-a) \sigma \om)^2}{2^{2h+2}}
\frac{\mis ({A_{h+1}})}{\mu (B_{h+1})}.
\ee
On the other hand, if we use first H\"older's inequality and then Proposition~\ref{cor-sob-poin-bis},
we obtain the following estimate
\begin{align*}
\int_{s_{h}}^s {{\int\!\!\!\!\!\!-}_{\!\!\!B_{h}}} (u - & k_h)_+^2 \zeta^2_h \,d\mu\,dt \leq 
	\left(\frac{\mis({A_{h}})}{\mu (B_h)}\right)^{(\esp-1)/\esp}
	\bigg( \int_{s_{h}}^s\!\!{{\int\!\!\!\!\!\!-}_{\!\!\!B_h}}
	(u-k_h)_+^{2\esp}\zeta_h^{2\esp}\,d\mu\,dt\bigg)^{1/\esp}						\\
\leq & \, c_{\ast}^{2/\esp} \rho^{2/\esp}\left( \frac{\mis ({A_{h}})}{\mu (B_h)}
	\right)^{(\esp-1)/\esp} \bigg(\sup_{t\in (s_{h},s)}
	{{\int\!\!\!\!\!\!-}_{\!\!\!B_h}}(u-k_h)_+^2 \zeta_h^2 \, d\mu \bigg)^{(\esp-1)/\esp} \times 		\\
     & \quad \times
	\bigg(\int_{s_{h}}^s\!\!{{\int\!\!\!\!\!\!-}_{\!\!\!B_h}} \left(2 \, \zeta_h^2 \, g_{(u-k_h)_+}^2 + 
	2 g_{\zeta_h}^2 (u-k_h)_+^2\right) \,d\mu\,dt \bigg)^{1/\esp}					\\
\leq & \,c_{\ast}^{2/\esp}\rho^{2/\esp}
	\left(\frac{\mis({A_{h}})}{\mu(B_h)}\right)^{(\esp-1)/\esp}
	\!\!\!\!\! \frac{1}{\mu(B_h)} \bigg(\sup_{t\in (s_{h},s)}
	{{\int}_{\!\!\!B_h}}(u-k_h)_+^2 \, d\mu \bigg)^{(\esp-1)/\esp}	\times					\\
& \quad \times \bigg( 2 \int_{s_{h}}^s\!\!{{\int}_{\!\!\!B_{h}}} \, g_{(u-k_h)_+}^2 \, d\mu\,dt
	+ \frac{2^{2h+5}}{\rho^2}
	\int_{s_{h}}^s\!\!{{\int}_{\!\!\!B_{h}}}(u-k_h)_+^2\,d\mu\,dt \bigg)^{1/\esp}  			\\
\leq & \, 2^{1/\esp} \, c_{\ast}^{2/\esp}\rho^{2/\esp}
	\left(\frac{\mis({A_{h}})}{\mu(B_h)}\right)^{(\esp-1)/\esp}
	\!\!\!\!\! \frac{1}{\mu(B_h)} \bigg(\sup_{t\in (s_{h},s)}
	{{\int}_{\!\!\!B_h}}(u-k_h)_+^2 \, d\mu \, +								\\
& \quad + \int_{s_{h}}^s\!\!{{\int}_{\!\!\!B_{h}}} \, g_{(u-k_h)_+}^2 \, d\mu\,dt
	+ \frac{2^{2h+4}}{\rho^2}
	\int_{s_{h}}^s\!\!{{\int}_{\!\!\!B_{h}}}(u-k_h)_+^2\,d\mu\,dt \bigg) \, .
\end{align*}
We continue by applying the energy estimate \eqref{DGgamma} with $r = \rho_h$, $R = \rho_{h-1}$,
$\alpha = 0$, $s_2 = s$, $\tau = s_h$, $s_1 = s_{h-1}$ and get
\begin{align*}
\int_{s_{h}}^s {{\int\!\!\!\!\!\!-}_{\!\!\!B_{h}}} (u - & k_h)_+^2 \zeta^2_h \,d\mu\,dt \\
\leq & \, 2^{1/\esp} \, c_{\ast}^{2/\esp} \rho^{2/\esp}
	\left(\frac{\mis({A_{h}})}{\mu(B_h)}\right)^{(\esp-1)/\esp}
	\!\!\!\!\! \frac{1}{\mu(B_h)} \bigg(\frac{2^{2h+4}}{\rho^2}
	\int_{s_{h}}^s\!\!{{\int}_{\!\!\!B_{h}}}(u-k_h)_+^2\,d\mu\,dt +						\\
& \hskip20pt + \gamma \left(1 + \frac{2^{h}}{\theta - \bar\theta} \right) \frac{2^{2h+2}}{\rho^2}
	\int_{s_{h-1}}^s\!\!{{\int}_{\!\!\!B_{h-1}}}(u-k_h)_+^2\,d\mu\,dt \bigg)				\\
\leq & \, C_1 \, \rho^{2/\esp}
	\left(\frac{\mis({A_{h}})}{\mu(B_h)}\right)^{(\esp-1)/\esp}
	\!\!\!\!\! \frac{1}{\mu(B_h)} \frac{2^{3h+4}}{\rho^2}
	\int\!\!\int_{Q_{h-1}^-}(u-k_h)_+^2\,d\mu\,dt 
\end{align*}
where $C_1 = 2^{1/\esp} \, c_{\ast}^{2/\esp} (1 + \gamma + \gamma / (\theta - \bar\theta))$.
We also have that $u-k_h\leq m_+-k_h=\sigma_h\omega$ and then
$$
\int\!\!\int_{Q_{h-1}^-} (u-k_h)_+^2 \, d\mu dt \leq \mis (A_{h-1}) (\sigma_h \omega)^2 \leq
\mis (A_{h-1}) (\sigma \omega)^2 \,.
$$
This implies that
\begin{align*}
\int_{s_{h}}^s {{\int\!\!\!\!\!\!-}_{\!\!\!B_{h}}} (u - & k_h)_+^2 \zeta^2_h \,d\mu\,dt \leq 
	\, 2^{2h+4} \, C_1 (\sigma \omega)^2 \,  \frac{1}{\rho^{2\frac{\esp-1}{\esp}}}
	\left(\frac{\mis({A_{h}})}{\mu(B_h)}\right)^{\frac{\esp-1}{\esp}}
	\!\! \frac{\mis (A_{h-1})}{\mu(B_h)}								\\
& \leq \, 2^{2h+4} \, C_1 (\sigma \omega)^2 \,  \theta^\frac{\esp-1}{\esp} 2^{N(1 + \frac{\esp-1}{\esp})}
	\left(\frac{\mis({A_{h-1}})}{\mis(B_{h-1})}\right)^{\frac{\esp-1}{\esp}}
	\frac{\mis({A_{h-1}})}{\mu(B_{h-1})}
\end{align*}
where we have estimated $\mu(B_{h-1}) / \mu(B_h) \leq 2^N$.
By the last inequality and \eqref{primo}, if we call $C_2$ the constant 
$C_1 \,  \theta^\frac{\esp-1}{\esp} 2^{N(1 + \frac{\esp-1}{\esp})+6} (1-a)^{-2}$,
we obtain
$$
\frac{\mis ({A_{h+1}})}{\mu (B_{h+1})} \leq
	{C_2} \, 2^{4h} \, 
	\left(\frac{\mis({A_{h-1}})}{\mis(B_{h-1})}\right)^{\frac{\esp-1}{\esp}}
	\frac{\mis({A_{h-1}})}{\mu(B_{h-1})};
$$
finally, dividing by $s-s_{h+1}$ and since $(s-s_{h-1})/(s-s_{h+1}) \leq \theta/\bar\theta$,
we can summarize what we have obtain by writing
\begin{equation}
\label{ciebi}
y_{h+1} \leq C_3 \, 2^{4h} \, y_{h-1}^{1 + (\esp-1)/\esp}
\end{equation}
where we have defined
$$
y_h:=\frac{\mis ({A_{h}})}{\mis (Q_h^-)} \hskip20pt \textrm{and} \hskip20pt
C_3 = C_2 \frac{\theta}{\bar\theta}
$$
i.e.
$$
C_3 = 2^{1/\esp} \, c_{\ast}^{2/\esp} \left(1 + \gamma + \frac{\gamma}{\theta - \bar\theta} \right)
\, \theta^\frac{\esp-1}{\esp} 2^{N(1 + \frac{\esp-1}{\esp})+6} \frac{1}{(1-a)^{2}} \frac{\theta}{\bar\theta} \, .
$$
We observe that the hypotheses of Lemma \ref{lemmuzzo} are satisfied
with $c = C_3$, $b = 2^4$ and $\alpha = (\esp-1)/\esp$. Then if
\[
y_0 \leq c^{-1/\alpha} b^{-1/\alpha^2}
\]
we would be able to conclude, since $\{y_h\}_h$ is a decreasing sequence, that
\[
\lim_{h\to \infty} y_h = 0.
\]
Since $y_0 = {\mis ({A_{0}})}/{\mis (Q_0^-)}$, where
\[
Q^-_0 = B_{\rho}(y) \times (s - \theta \rho^2, s],\ \textrm{ and }\
A_0 = \{ (x,t) \in Q^-_0 : u(x,t) > m_+ - \sigma \om \}.
\]
To do this it is sufficient to choose $\nu_+$ to be
\[
\nu_+ =  C_3^{-\esp/(\esp-1)} 16^{-\esp^2/(\esp-1)^2} \, .
\]
By definition of $y_h$ and $A_h$  we see that
\[
u \leq m_+ - a \sigma \om \qquad
\mis\textrm{-a.e. in }B_{\rho/2}(y) \times \left(s - \bar\theta\rho^2,s\right],
\]
which completes the proof.
\end{proof}

An analogous argument proves the following claim.

\bprop
\label{degiorgi-}
Let $\rho,\,\theta > 0$ be
chosen such that the cylinder $Q_{\rho, \theta}^-(y,s)\subset\Omega
\times (0,T)$.  Then for each choice of $a, \sigma \in (0,1)$ and
$\bar\theta \in (0, {\theta})$, there is $\nu_-$, depending only on
$N, \gamma, c_{\ast}, a, \theta, \bar\theta$,
such that for every $u \in DG_-(\Omega, T, \gamma)$
and $m_+$ and $\om$ for which
$$
m_- \leq \essinf_{Q_{\rho, \theta}^-(y,s)} u
\quad\textrm{and}\quad
\om \geq \osc_{Q_{\rho, \theta}^-(y,s)} u,
$$
the following claim holds true: if
\[
\mis \left( \{ (x,t) \in Q_{\rho, \theta}^-(y,s): u(x,t) < m_- +\sigma \om \} \right)
	\leq \nu_-  \mis \left( Q_{\rho, \theta}^-(y,s) \right),
\]
then
$$
u(x,t) \geq m_- + a  \sigma  \om \hskip30pt
\mis\text{-a.e. in } B_{\rho/2}(y)\times (s-\bar\theta \rho^2,s].
$$
\eprop

\begin{proof}
  It is sufficient to argue as in the proof of
  Proposition~\ref{prop-DeGiorgi1} considering $(u-\hat k_h)_-$ in
  place of $(u-k_h)_+$, where $\hat k_h = m_- + \si_h \omega$.
\end{proof}

The next result is the so called \emph{expansion of
positivity}. Following the approach of DiBenedetto~\cite{Dib89Har}
we show that pointwise information in a ball $B_\rho$ implies
pointwise information in the expanded ball $B_{2\rho}$ at a further
time level.

\bprop
\label{esp_positivita}
Let $(x^{\ast}, t^{\ast}) \in \Om \times (0,T)$ and $\rho > 0$
with $B_{5\Lambda\rho}(x^{\ast})\times [t^\ast-\rho^2,t^\ast+\rho^2]\subset
\Om \times (0,T)$. Then there exists $\tilde\theta \in (0,1)$, depending only on $\gamma$, such
that for every $\hat\theta \in (0,\tilde\theta)$ there exists $\la \in
(0,1)$, depending on $\tilde\theta$ and $\hat\theta$, such that for every $h>0$ and  
for every $u \in DG(\Omega, T, \gamma)$ the following is valid. If 
\[
u(x,t^{\ast}) \geq h \qquad \mu-\textrm{a.e. in}\ B_{\rho}(x^{\ast}),
\]
then
\[
u(x,t) \geq \la h \qquad \mu-\textrm{a.e. in}\
B_{2\rho}(x^{\ast}), \qquad \mbox{for every } t \in [t^{\ast} + \hat\theta \rho^2, t^{\ast} + \tilde\theta \rho^2].
\]
\eprop

From now on, let us denote
\[ 
A_{h,\rho} (x^{\ast}, t^{\ast}) := \{ x \in B_{\rho}(x^{\ast}) :
u(x,t^{\ast}) < h \}.
\]

\boss
\label{Aacca_ro}
Let $(x^{\ast}, t^{\ast}) \in \Om \times (0,T)$ and $h > 0$ be fixed. Then if $u(x,t^{\ast}) \geq h$ for $\mu$-a.e.
$x \in B_{\rho}(x^{\ast})$ we have that
\[
A_{h,4\rho}(x^{\ast}, t^{\ast}) \subset B_{4\rho}(x^{\ast}) \setminus B_{\rho}(x^{\ast}).
\]
The doubling property implies
$$
\mu (A_{h,4\rho}(x^{\ast}, t^{\ast})) \leq \left(1-\frac1{4^N}\right)
\mu (B_{4\rho}(x^{\ast})) .
$$
\eoss

The proof of Proposition~\ref{esp_positivita} requires some preliminary lemmas.

\begin{lemma}
\label{lemma1}
Given $(x^{\ast} \!, t^{\ast})$ for which
$B_{4\rho}(x^{\ast}) \times [t^{\ast}, t^{\ast} + \theta \,  \rho^2] \subset \Omega \times (0,T)$,
there exist $\eta \in (0,1)$ and $\tilde\theta \in (0, \theta)$ such that,
given $h > 0$ and $u \geq 0$ in $DG(\Omega, T, \gamma)$ for which the following holds
$$
u(x,t^{\ast}) \geq h \hskip20pt \mu-\text{a.e. in } B_{\rho}(x^{\ast})
$$
then
$$
\mu (A_{\eta h, 4 \rho}(x^{\ast}, t)) <
\left( 1 - {\frac{1}{4^{N+1}}} \right) \mu (B_{4\rho}(x^\ast))
$$
for every $t \in [t^{\ast},t^{\ast} + \tilde\theta \rho^2]$.
\end{lemma}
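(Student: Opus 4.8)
The plan is to exploit the lower Caccioppoli estimate — the $DG_-$ half of the hypothesis $u\in DG(\Omega,T,\gamma)$ — at truncation level $k=h$, together with the information at the initial time recorded in Remark~\ref{Aacca_ro}, in order to bound $\int_{B_{4\rho}(x^{\ast})}(u-h)_-^2(x,t)\,d\mu$ for $t$ slightly larger than $t^{\ast}$, and then to convert this $L^2$-bound into the stated measure bound by Chebyshev's inequality. Since $u\ge 0$ and $u(\cdot,t^{\ast})\ge h$ $\mu$-a.e.\ on $B_\rho(x^{\ast})$, Remark~\ref{Aacca_ro} and $(u-h)_-\le h$ give
\[
\int_{B_{4\rho}(x^{\ast})}(u-h)_-^2(x,t^{\ast})\,d\mu\ \le\ h^2\,\mu\big(A_{h,4\rho}(x^{\ast},t^{\ast})\big)\ \le\ \big(1-4^{-N}\big)\,h^2\,\mu\big(B_{4\rho}(x^{\ast})\big);
\]
the point is that $1-4^{-N}$ leaves a fixed gap $4^{-N}-4^{-(N+1)}>0$ below $1-4^{-(N+1)}$, into which a small time-evolution error can be absorbed.

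For the main step I would apply the energy estimate \eqref{DGgamma} for $(u-k)_-$, centred at $x^{\ast}$, with $k=h$, outer radius $R=4\rho$, inner radius $r=4\rho-\varepsilon$ for a small $\varepsilon\in(0,\rho)$, with $\alpha=1$, with $s_1=t^{\ast}$, and with the free parameter of Definition~\ref{classiDG} — call it $\vartheta$ — taken small, so that $s_2=t^{\ast}+16\vartheta\rho^2$ and $\tau=t^{\ast}+\vartheta\varepsilon^2$; the inclusion required in Definition~\ref{classiDG} holds for $\vartheta$ small because the hypothesis forces $B_{4\rho}(x^{\ast})\subset\Omega$ and $0<t^{\ast}<t^{\ast}+\theta\rho^2<T$. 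With $\alpha=1$ the $(1-\alpha)/\vartheta$-term disappears, the initial integral is exactly the one just bounded, and the space-time term is at most $\gamma\varepsilon^{-2}h^2\mu(B_{4\rho})(s_2-t^{\ast})$ by $(u-h)_-\le h$, whence for $t\in(\tau,s_2)$
\[
\int_{B_{4\rho-\varepsilon}(x^{\ast})}(u-h)_-^2(x,t)\,d\mu\ \le\ \Big[(1-4^{-N})+\frac{16\gamma\vartheta\rho^2}{\varepsilon^2}\Big]\,h^2\,\mu\big(B_{4\rho}(x^{\ast})\big).
\]
Bounding the remaining integral over the annulus by $h^2\mu\big(B_{4\rho}(x^{\ast})\setminus B_{4\rho-\varepsilon}(x^{\ast})\big)$ and using $\mu\big(B_{4\rho}(x^{\ast})\setminus B_{4\rho-\varepsilon}(x^{\ast})\big)\downarrow 0$ as $\varepsilon\downarrow 0$ (continuity of $\mu$ from above: the annulus decreases to $\emptyset$ because $B_{4\rho}(x^{\ast})$ is open and $\mu(B_{4\rho}(x^{\ast}))<\infty$), I would first fix $\varepsilon$ so that this annular measure is $<\tfrac12(4^{-N}-4^{-(N+1)})\mu(B_{4\rho})$ and then fix $\vartheta$ so that $16\gamma\vartheta\rho^2/\varepsilon^2<\tfrac12(4^{-N}-4^{-(N+1)})$. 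This yields a fixed $c_0>0$ with $\int_{B_{4\rho}(x^{\ast})}(u-h)_-^2(x,t)\,d\mu\le(1-4^{-(N+1)}-c_0)h^2\mu(B_{4\rho}(x^{\ast}))$ for $t\in(\tau,s_2)$, and also for $t=t^{\ast}$ by the first display.

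To finish, on $A_{\eta h,4\rho}(x^{\ast},t)$ one has $(u-h)_-\ge(1-\eta)h$, so the previous bound gives
\[
\mu\big(A_{\eta h,4\rho}(x^{\ast},t)\big)\ \le\ \frac{1}{(1-\eta)^2 h^2}\int_{B_{4\rho}(x^{\ast})}(u-h)_-^2(x,t)\,d\mu\ \le\ \frac{1-4^{-(N+1)}-c_0}{(1-\eta)^2}\,\mu\big(B_{4\rho}(x^{\ast})\big),
\]
which is $<(1-4^{-(N+1)})\mu(B_{4\rho}(x^{\ast}))$ once $\eta\in(0,1)$ is taken small enough. Taking $\tilde\theta:=8\vartheta$ (after shrinking $\vartheta$, if needed, so that $\tilde\theta<\theta$ and the containments persist) proves the claim for all $t\in(\tau,\,t^{\ast}+\tilde\theta\rho^2]\subset(\tau,s_2)$ and for $t=t^{\ast}$; for the leftover $t\in(t^{\ast},\tau]$ one reruns the same estimate with the same $\varepsilon$ but with free parameter $\vartheta'\in\big((t-t^{\ast})/(16\rho^2),\ (t-t^{\ast})/\varepsilon^2\big)$, so that $t$ lies in the corresponding open time-interval and $\vartheta'<\vartheta$, giving the same bound.

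The only genuinely non-routine point is the mismatch between the open ball $B_{4\rho}(x^{\ast})$ appearing in the conclusion and the strictly smaller ball $B_r$, $r<R$, on which \eqref{DGgamma} delivers control: one cannot take $r=R=4\rho$, while enlarging $R$ beyond $4\rho$ would spoil the sharp constant $1-4^{-N}$ of Remark~\ref{Aacca_ro}, since a doubling measure may charge the sphere $\partial B_{4\rho}$. Working on $B_{4\rho-\varepsilon}$ and recovering $B_{4\rho}$ by continuity of $\mu$ is exactly why $\eta$ and $\tilde\theta$ must be allowed to depend on $x^{\ast},t^{\ast},\rho$ and on $\mu$ near that sphere, and not only on $N,\gamma$ and the Poincar\'e data; a minor bookkeeping nuisance, handled as above, is that the supremum in \eqref{DGgamma} runs over an open time-interval excluding $t^{\ast}$.
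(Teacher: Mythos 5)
Your proof is correct and follows essentially the same approach as the paper: apply the $DG_-$ energy estimate at level $k=h$ with $\alpha=1$ on a slightly shrunken ball, bound the initial-time term via Remark~\ref{Aacca_ro}, absorb the space-time term by taking the free time parameter small, control the annulus by continuity of $\mu$ from above, and finish with Chebyshev. The paper runs the same estimates with a multiplicative shrinkage $r=4\rho(1-\sigma)$ and phrases the conclusion as a proof by contradiction (choosing $\tilde\theta=\sigma^3$ and letting $\eta,\sigma\to 0$), but the substance is identical; your direct version is if anything more careful about the fact that \eqref{DGgamma} only controls $t>\tau>s_1$, and your closing remark that $\eta,\tilde\theta$ end up depending on $\mu$ near $\partial B_{4\rho}(x^\ast)$ is apt and applies equally to the paper's argument.
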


\begin{proof}
  We may assume that $h=1$, otherwise we consider the scaled function
  $u/h$.  We apply the energy estimate of Definition \ref{classiDG}
  with $R=4\rho$, $r=4\rho(1-\sigma)$, $s_1 = t^{\ast}$,
$s_2 = t^{\ast} + \tilde\theta \rho^2$ with $\tilde\theta$ to be chosen, $\tau = t^{\ast}$,
$\si \in (0,1)$, and $\alpha = 1$.  This gives us
\begin{align*}
\sup_{t^{\ast} < t < t^{\ast} + \tilde\theta
    \rho^2} & \int_{B_{4\rho(1-\sigma)}(x^{\ast})} (u-1)_-^2 (x,t) d\mu
  (x) + \int_{t^{\ast} 
}^{t^{\ast} + \tilde\theta
    \rho^2}
  \int_{B_{4\rho(1-\sigma)}(x^{\ast})} g_{(u-1)_-}^2 \,d\mu\,dt 			 \\
& \leq \int_{B_{4\rho}(x^{\ast})} (u-1)_-^2
  (x,t^\ast) d\mu (x) + \frac{\gamma}{16 \sigma^2 \rho^2}
  \int_{t^{\ast}}^{t^{\ast} + \tilde\theta
    \rho^2}\!\!\!\!\int_{B_{4\rho}(x^{\ast})} (u-1)_-^2 \,d\mu\,dt.
\end{align*}
Since $u \geq 1 \hskip5pt \textrm{in }B_{\rho}(x^{\ast})$, we deduce
from Remark \ref{Aacca_ro} that
\[
\mu(\{ x \in B_{4\rho}(x^{\ast}): u(x,t^{\ast}) < 1 \}) < \left( 1 - \frac1{4^N}
\right) \mu(B_{4\rho}(x^{\ast})) .
\]
Notice that $(u-1)_- \leq 1$; thus we have in particular
\begin{align*}
\sup_{t^{\ast} < t < t^{\ast} + \tilde\theta
    \rho^2} &
  \int_{B_{4\rho(1-\sigma)}(x^{\ast})} (u-1)_-^2 (x,t) \,d\mu(x) \\
& \leq \int_{B_{4\rho}(x^{\ast})} (u-1)_-^2
  (x,t^\ast) \,d\mu (x) + \frac{\gamma}{16 \sigma^2 \rho^2}
  \int_{t^{\ast}}^{t^{\ast} + \tilde\theta \rho^2}\!\!\!\!\int_{B_{4\rho}(x^{\ast})} (u-1)_-^2 \,d\mu\,dt \\
& \leq \left( 1 - \frac{1}{4^{N}} \right)
  \mu(B_{4\rho}(x^{\ast})) + \frac{\gamma}{16 \sigma^2 \rho^2}
 \int_{t^{\ast}}^{t^{\ast} + \tilde\theta \rho^2}\!\!\!\!
  \int_{B_{4\rho}(x^{\ast})} (u-1)_-^2 \,d\mu\,dt \\
& \leq \left( 1 - \frac{1}{4^{N}} \right)
  \mu(B_{4\rho}(x^{\ast})) + \frac{\gamma \tilde\theta}{16 \sigma^2}
  \mu(B_{4\rho}(x^{\ast}))
\end{align*}
Writing $A_{h,\rho}(t)$ in place of $A_{h,\rho}(x^{\ast}, t)$,
decomposing
\[
A_{\eta,4\rho}(t) = A_{\eta,4\rho(1-\si)} (t) \cup \{x \in
B_{4\rho}(x^{\ast}) \setminus B_{4\rho(1-\sigma)}(x^{\ast}): u(x,t) <
\eta \},
\]
and using the doubling property we have
\begin{align*}
  \mu(A_{\eta,4\rho}(t)) & \leq \mu(A_{\eta,4\rho(1-\sigma)} (t)) +
  \mu \big(B_{4\rho}(x^{\ast}) \setminus
  B_{4\rho(1-\sigma)}(x^{\ast})\big).
\end{align*}
On the other hand,
\[
\int_{B_{4\rho(1-\sigma)}(x^{\ast})} (u-1)_-^2 (x,t)\, d\mu(x)
\geq \int_{A_{\eta, 4\rho(1-\si)}(t)} (u-1)_-^2(x,t)\, d\mu(x)
\geq (1-\eta)^2 \mu(A_{\eta, 4\rho(1-\si)}(t)).
\]
Finally, we obtain
\begin{align} \label{stimasigma} \mu(A_{\eta,4\rho} & (t)) \leq
  \mu(A_{\eta,4\rho(1-\si)}(t)) +
  \mu \left(B_{4\rho}(x^{\ast}) \setminus B_{4\rho(1-\sigma)}(x^{\ast})\right) \\
  \leq & (1-\eta)^{-2} \int_{B_{4\rho(1-\sigma)}(x^{\ast})} (u-1)_-^2
  (x,t) d\mu
  + \mu \left(B_{4\rho}(x^{\ast}) \setminus B_{4\rho(1-\sigma)}(x^{\ast})\right) \nonumber \\
  & \leq (1-\eta)^{-2} \left( 1 - \frac{1}{4^{N}} + {\frac{\gamma
        \tilde\theta}{16 \si^2}} \right) \mu(B_{4\rho}(x^{\ast})) + \mu
  \left(B_{4\rho}(x^{\ast}) \setminus
    B_{4\rho(1-\sigma)}(x^{\ast})\right). \nonumber
\end{align}
If the claim of the lemma was false, then for every $\tilde\theta, \eta \in
(0,1)$ there exists $\bar{t} \in [t^\ast, t^\ast + \tilde\theta\rho^2]$ for which
\[
\mu(A_{\eta,4\rho}(\overline{t})) \geq
\left( 1 - \frac1{4^{N+1}} \right) \mu \left( B_{4\rho}(x^{\ast}) \right).
\]
Applying this last estimate, then \eqref{stimasigma} for $t = \bar{t}$ and dividing by
$\mu \left( B_{4\rho}(x^{\ast}) \right)$ we would have
\begin{align*}
\left( 1 - \frac1{4^{N+1}}\right)
  \leq (1-\eta)^{-2} \left(1 - \frac1{4^N} + \frac{\gamma \tilde \theta}{16\si^2}\right)
    + \frac{ \mu(B_{4\rho}(x^{\ast}) \setminus B_{4\rho(1-\sigma)}(x^{\ast}))}{\mu( B_{4\rho}(x^{\ast}))} .
\end{align*}
Choosing, for instance,
$\tilde\theta = \si^3$ and letting $\eta$ and $\si$ go to zero we would have the contradiction
$1 - 4^{-N-1} \leq 1 - 4^{-N}$.
\end{proof}

\begin{lemma}
\label{lemma2}
Assume $u\in DG(\Omega,T,\gamma)$, $u\geq 0$. Let $\tilde\theta$ be as in
Lemma $\ref{lemma1}$ and $h > 0$. Consider $(x^{\ast}, t^{\ast})$ in such a way that 
$B_{5\Lambda\rho}(x^\ast)\times [t^\ast-\tilde \theta\rho^2,t^\ast+\tilde \theta\rho^2]\subset \Omega \times (0,T)$ and
assume that
\[
u(x,t^\ast)\geq h, \qquad \mu-\mbox{a.e. } x\in B_\rho(x^\ast).
\]
Then for every $\e > 0$ there exists $\eta_1\in (0,1)$, depending on $\e$, $c_d$, $\gamma$, $\tilde\theta$, 
and the constant in the weak Poincar\'e inequality, 
such that
\[
\mis \left(\{ (x,t)\in B_{4\rho}(x^\ast)\times [t^\ast,t^\ast + \tilde\theta\rho^2] : u(x,t) <
  \eta_1 h\}\right) < \e \mis
\left(B_{4\rho}(x^\ast)\times [t^\ast,t^\ast + \tilde\theta\rho^2]\right) .
\]
\end{lemma}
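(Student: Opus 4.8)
The plan is to feed the time‑slice lower bound of Lemma~\ref{lemma1} into De~Giorgi's isoperimetric inequality (Lemma~\ref{lemma2.2}), used with the exponent $q=p$ for some $p\in(2-\varepsilon,2)$ supplied by the Keith--Zhong theorem, and then to extract smallness by a pigeonhole argument along a long chain of levels. We may assume $h=1$ (replace $u$ by $u/h$). First fix the constant $\eta_0\in(0,1)$ coming from Lemma~\ref{lemma1}, so that
\[
\mu\bigl(\{u(\cdot,t)\ge\eta_0\}\cap B_{4\rho}(x^\ast)\bigr)>4^{-N-1}\mu(B_{4\rho}(x^\ast))=:c_0\,\mu(B_{4\rho}(x^\ast))
\]
for every $t\in I:=[t^\ast,t^\ast+\tilde\theta\rho^2]$. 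Introduce the decreasing levels $k_j:=\eta_0 2^{-j}$, the slice functions $Y_j(t):=\mu(\{u(\cdot,t)\le k_j\}\cap B_{4\rho}(x^\ast))$, and the pairwise disjoint space--time annuli
\[
E_j:=\{(x,t):x\in B_{4\Lambda\rho}(x^\ast),\ t\in I,\ k_{j+1}<u(x,t)<k_j\},
\]
and aim at a bound for $\mis(\{u\le k_{s}\}\cap B_{4\rho}(x^\ast)\times I)$ for a large $s=s(\e)$ to be chosen.

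For a.e.\ $t\in I$ and each $j\ge 1$ (so that $\{u(\cdot,t)>k_j\}\supseteq\{u(\cdot,t)\ge\eta_0\}$) apply Lemma~\ref{lemma2.2} on $B_{4\rho}(x^\ast)$ with $q=p$, $k=k_{j+1}$, $l=k_j$, raise it to the power $p$ and use the lower bound above; since $g_u=g_{(u-k_j)_-}$ a.e.\ on $\{u<k_j\}$ this gives
\[
(k_j-k_{j+1})^p\,c_0\,\mu(B_{4\rho})\,Y_{j+1}(t)\le(8C_p)^p\rho^p\,\mu(B_{4\rho})\int_{\{k_{j+1}<u(\cdot,t)<k_j\}\cap B_{4\Lambda\rho}(x^\ast)}g_u^p\,d\mu .
\]
Integrate in $t$ over $I$, apply H\"older's inequality in $\mis$ (exponents $2/p$ and $2/(2-p)$) to pass from $g_u^p$ to $g_u^2$ on $E_j$, and bound $\iint_{E_j}g_u^2\,d\mu\,dt$ by the energy estimate \eqref{DGgamma} for $(u-k_j)_-$ with $\alpha=1$ on a cylinder of outer radius $5\Lambda\rho$ (this is where the hypothesis on $B_{5\Lambda\rho}(x^\ast)$ enters), together with $(u-k_j)_-\le k_j$; this produces $\iint_{E_j}g_u^2\,d\mu\,dt\le C(\gamma,\tilde\theta,c_d)\,k_j^2\,\mu(B_{4\rho})$. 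Because $(k_j-k_{j+1})^p=\eta_0^p2^{-p(j+1)}$ while $(k_j^2)^{p/2}=\eta_0^p2^{-pj}$, the powers of $\eta_0$ cancel and one is left with
\[
\mis\bigl(\{u\le k_{j+1}\}\cap B_{4\rho}(x^\ast)\times I\bigr)\le C_4\,\rho^p\,\mu(B_{4\rho})^{p/2}\,\mis(E_j)^{1-p/2},
\]
with $C_4$ depending only on $c_d$, $\gamma$, $\tilde\theta$ and the constants in the weak Poincar\'e inequality.

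To conclude, note that the $E_j$, $j=1,\dots,s-1$, are pairwise disjoint subsets of $B_{4\Lambda\rho}(x^\ast)\times I$, hence
\[
\sum_{j=1}^{s-1}\mis(E_j)\le\mis\bigl(B_{4\Lambda\rho}(x^\ast)\times I\bigr)\le c_d\,\tilde\theta\,\rho^2\,\mu(B_{4\rho}(x^\ast)),
\]
so by the pigeonhole principle there is $j^\ast\in\{1,\dots,s-1\}$ with $\mis(E_{j^\ast})\le c_d\tilde\theta\rho^2\mu(B_{4\rho})/(s-1)$. Plugging this into the previous display and using $\mis(B_{4\rho}(x^\ast)\times I)=\tilde\theta\rho^2\mu(B_{4\rho}(x^\ast))$ gives
\[
\mis\bigl(\{u\le k_{j^\ast+1}\}\cap B_{4\rho}(x^\ast)\times I\bigr)\le C_5\,(s-1)^{-(1-p/2)}\,\mis\bigl(B_{4\rho}(x^\ast)\times I\bigr).
\]
Since $1-p/2>0$, choose $s$ with $C_5(s-1)^{-(1-p/2)}<\e$ and set $\eta_1:=\eta_0 2^{-s}$; then $\{u<\eta_1\}\subseteq\{u\le k_{j^\ast+1}\}$ and the claimed estimate follows.

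I expect the key point to be the choice $q=p<2$ in Lemma~\ref{lemma2.2}: it is precisely the strict positivity of $1-p/2$, guaranteed by the Keith--Zhong self‑improvement of the Poincar\'e inequality, that converts the pigeonhole gain $\mis(E_{j^\ast})\lesssim(\text{total mass})/s$ into genuine smallness as $s\to\infty$; with the borderline exponent $q=2$ the argument would only show that the bad set occupies a fixed fraction. A secondary, purely technical obstacle is organizing the admissible cylinders of the parabolic De~Giorgi class (whose time‑length is tied to the spatial radius) so that the energy estimate applies on all of $I$ with a constant depending only on $\gamma$ and $\tilde\theta$, which is what forces the slightly enlarged ball $B_{5\Lambda\rho}(x^\ast)$ in the hypothesis.
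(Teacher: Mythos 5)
Your proof is correct and follows essentially the same route as the paper's: you invoke Lemma~\ref{lemma1} for the time-slice lower bound, apply Lemma~\ref{lemma2.2} with a sub-quadratic exponent $p<2$ supplied by Keith--Zhong, control the gradient via the Caccioppoli estimate on the enlarged ball $B_{5\Lambda\rho}$, and exploit the disjointness of the space--time level annuli to extract smallness. The only cosmetic differences are that you take $q=p$ rather than $q=1$ in Lemma~\ref{lemma2.2}, and that you phrase the final step as a pigeonhole argument on the annuli $E_j$ whereas the paper derives the recursion $a_{m+1,4\rho}^{2/(2-p)}\le c\,(a_{m-1,4\Lambda\rho}-a_{m,4\Lambda\rho})$ and uses the convergence of the resulting telescoping series to deduce $a_m\to 0$---the same observation in slightly different form.
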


\begin{proof}
  Apply the energy estimate \eqref{DGgamma} in
  $B_{5\Lambda\rho}(x^\ast)\times (t^\ast - 2 \tilde\theta\rho^2,t^\ast)$ with
\[
R = 5\Lambda\rho,\ r = 4\Lambda\rho,\ s_2 = t^{\ast}+ \tilde\theta \rho^2,
\ s_1 = t^{\ast}-\tilde \theta\rho^2,\ \tau = t^{\ast},\ \textrm{ and }\ \alpha = 0,
\]
at the level $k= \eta h 2^{-m}$, where $\eta > 0$ and $m \in \N$. We obtain
\begin{align} \label{forniscono}
  \int_{t^\ast}^{t^\ast+\tilde\theta \rho^2}& \!\!\int_{B_{4\Lambda\rho}(x^\ast)}
		g_{(u - \frac{\eta h}{2^m})_-}^2 \,d\mu\,dt \\
  & \leq \gamma \left( 1 + \frac1{2\tilde\theta} \right)
  \frac{1}{(\Lambda\rho)^2} \int_{t^\ast-\tilde \theta\rho^2}^{t^\ast+ \tilde \theta \rho^2}
  \!\!\int_{B_{5\Lambda\rho}(x^{\ast})}
  \left(u-\frac{\eta h}{ 2^m}\right)_-^2 \,d\mu\,dt \nonumber \\
  & \leq \gamma \left( 1 + \frac1{2\tilde\theta} \right)
  \frac{1}{(\Lambda\rho)^2}
  {\frac{\eta^2h^2}{2^{2m}}}  2 \tilde\theta \rho^2 \mu\left(B_{5\Lambda\rho}(x^\ast)\right) \nonumber \\
  & \leq \gamma( 2 \tilde\theta + 1)\frac{\eta^2h^2}{2^{2m}}
  \mu\left(B_{5\Lambda\rho}(x^\ast)\right). \nonumber
\end{align}
To simplify notation, let us write $A_{h,\rho}(t)$ instead of
$A_{h,\rho}(x^\ast, t)$. Lemma \ref{lemma2.2} with parameters $k = \eta
h/2^m$, $l = \eta h/2^{m-1}$, $q=1$ and $2-\varepsilon <p< 2$, implies
\begin{align} \label{servesubito} \int_{B_{4\rho}(x^\ast)}
  \left(u-\frac{\eta h }{2^{m}}\right)_-(x,t)\, d\mu & \leq
  \frac{\eta h }{2^{m}} \mu(A_{\eta h 2^{-m},4\rho}(t))  \\
  & \leq \frac{8C_p\rho\mu(
    B_{4\rho}(x^\ast))^{2-1/p}}{\mu(B_{4\rho}(x^\ast) \setminus A_{\eta h
      2^{-m+1},4\rho}(\tau))} \biggl(\int_{\tilde{A}(t)}g_{(u -
    \frac{\eta h}{2^m})_-}^{p}(x,t) \,d\mu\biggr)^{1/p}, \nonumber
\end{align}
for every $t \in [t^\ast,t^\ast+\tilde \theta\rho^2]$, where $\tilde{A}(t):=
A_{\eta h 2^{-m+1},4\Lambda\rho}(t) \setminus A_{\eta h
  2^{-m},4\Lambda\rho}(t)$. Clearly,
\[
B_{4\rho}(x^\ast)\setminus A_{\eta h 2^{-m+1},4\rho}(t)\supseteq B_{4\rho}(x^\ast)\setminus A_{\eta h,4\rho}(t).
\]
If we choose $\eta$ so that it satisfies the hypothesis of
Lemma~\ref{lemma1} and write
\[
\mu(B_{4\rho}(x^\ast) \setminus A_{\eta h,4\rho}(t)) + \mu(A_{\eta
  h,4\rho}(t)) = \mu(B_{4\rho}(x^\ast)),
\]
then we deduce that
\[
\mu(B_{4\rho}(x^\ast)\setminus A_{\eta h,4\rho}(t)) > 4^{-N-1}
\mu(B_{4\rho}(x^\ast))
\]
for every $t \in [t^\ast,t^\ast+\tilde \theta\rho^2]$. We finally arrive at
\[
\int_{B_{4\rho}(x^{\ast})} \left(u-\frac{\eta h}{2^m}\right)_-(x,t)\, d\mu
\leq 8 C_{p}4^{N+1}\mu(B_{4\rho}(x^\ast))^{1-1/p}\rho\biggl(
\int_{\tilde{A}(t)} g_{(u -
    \frac{\eta h}{2^m})_-}^{p}(x,t)\, d\mu\biggr)^{1/p}.
\]
Integrating this with respect to $t$ and defining the decreasing
sequence $\{a_{m,\rho}\}_{m=0}^\infty$ as
\begin{align*}
a_{m,\rho} : & = \dint_{t^{\ast}}^{t^{\ast}+\tilde\theta\rho^2} \mu \left( A_{\eta h 2^{-m},\rho}(t) \right) \,d t \\
    & = \mis \biggl( \biggl\{ (x,t) \in B_\rho(x^{\ast}) \times [t^{\ast} - \tilde\theta\rho^2,t^{\ast}]: u(x,t) < 
\frac{\eta h}{2^{m}}\biggr\} \biggr),
\end{align*}
we get by the H\"older inequality
\begin{align}\label{intau}
  & \int_{t^{\ast} }^{t^{\ast}+\tilde\theta \rho^2} \!\!\int_{B_{4\rho}(x^{\ast})} 
	\biggl(u-\frac{\eta h}{2^m}\biggr)_- (x,t)\, d\mu\, d t \\
  & \leq 8C_{p}4^{N+1}\mu(B_{4\rho}(x^\ast))^{1-1/p}\rho\biggl(\int_{t^\ast }^{t^\ast+\tilde\theta
    \rho^2} \!\! \int_{\tilde{A}(t)} g_{(u -
    \frac{\eta h}{2^m})_-}^{p}(x,t)\,d\mu\,d t\biggr)^{1/p}  \nonumber \\
  & \leq 8C_{p}4^{N+1}\mu(B_{4\rho}(x^\ast))^{1-1/p}\rho
  \biggl(\int_{t^\ast}^{t^\ast+\tilde\theta \rho^2}
  \!\!\int_{B_{4\Lambda\rho}(x^\ast)} g_{(u-\frac{\eta
      h}{2^m})_-}^2\,d\mu\, d t\biggr)^{1/2} (a_{m-1,4\Lambda\rho} -
  a_{m,4\Lambda\rho})^{(2-p)/2}. \nonumber
\end{align}
On the other hand,
\[
\int_{t^\ast }^{t^\ast+\tilde\theta \rho^2} \!\!\int_{B_{4\rho}(x^\ast)} \biggl(u
-\frac{\eta h}{ 2^{m}}\biggr)_- (x,t) \, d\mu \,d t \geq
\frac{\eta h}{ 2^{m+1}}a_{m+1,4\rho}
\]
from which, using first \eqref{intau} and then \eqref{forniscono}, we obtain
\[
a_{m+1,4\rho}^{2/(2-p)} \leq c(a_{m-1,4\Lambda\rho} - a_{m,4\Lambda\rho}),
\]
where $c = (C_{p}^2 16^{N+3}\gamma (2 \tilde\theta+1)\mu(B_{4\rho}(x^\ast))^{2(1-1/p)}
\mu(B_{5\Lambda\rho}(x^\ast))\rho^2)^{1/(2-p)}$. Hence for every $m_\ast \in {\bf N}$ we have
\[
\sum_{m=1}^{m_\ast}a_{m+1,4\rho}^{2/(2-p)} \leq c(a_{0,4\Lambda\rho}
- a_{m_\ast,4\Lambda\rho}).
\]
Since $\{a_{m,\rho}\}_{m=0}^\infty$ is decreasing the sum
$\sum_{m=1}^{\infty}a_{m+1,4\rho}^{2/(2-p)}$ converges, and consequently
$$\lim_{m\to\infty} a_{m,4\rho} = 0.$$ This completes the proof.
\end{proof}

\begin{proof}[Proof of Proposition~\ref{esp_positivita}]
The proof is a direct consequence of Proposition~\ref{degiorgi-} used with the right parameters.  
Fix $\theta=1$ and let $\tilde\theta$ be as in Lemma~\ref{lemma1}; choose also $\hat\theta\in (0,\tilde\theta)$
and let $\nu_-$ be the constant in Proposition~\ref{degiorgi-} determined by these parameters and $a=1/2$.  
Apply Lemma~\ref{lemma2} with $\epsilon=\nu_-$ and obtain the constant $\eta_1$ for which the 
assumptions of Proposition~\ref{degiorgi-} are satisfied with 
\[
y = x^{\ast},\,s = t^{\ast} + \tilde\theta\rho^2, \bar\theta := \tilde\theta -\hat\theta, 
m_-=0 \mbox{ and } \sigma=\frac{\eta_1h}{\omega}.
\]
This concludes the proof with $\lambda=\frac{1}{2}\eta_1$.
\end{proof}

The following is the main result of this paper.

\begin{theorem}[Parabolic Harnack]\label{Harnack}
Assume $u\in DG(\Omega,T,\gamma)$, $u\geq 0$. For any constant
$c_2\in (0,1]$, there exists $c_1>0$, depending on $c_d$, $\gamma$, and the constants
in the weak $(1,2)$--Poincar\'e inequality, such that for
any Lebesgue point $(x_0,t_0)\in \Omega\times (0,T)$ with 
$B_{5\Lambda\rho}(x_0)\times (t_0-\rho^2,t_0+5\rho^2)\subset
\Omega\times (0,T)$ we have
$$
u(x_0,t_0)\leq c_1 \essinf_{B_\rho(x_0)} u(x,t_0+c_2\rho^2).
$$
As a consequence, $u$ is locally $\alpha$-H\"older continuous with
$\alpha=-\log_2 \frac{1-\gamma}{\gamma}$ and satisfies the strong
maximum principle.
\end{theorem}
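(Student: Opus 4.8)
The plan is to follow DiBenedetto's scheme, adapted to the metric setting via the lemmas above. Write $M:=u(x_0,t_0)$; if $M=0$ there is nothing to prove, so assume $M>0$. The real difficulty will be to deduce, from the single Lebesgue value $M$ at $(x_0,t_0)$, a pointwise lower bound $u\ge cM$ on a whole ball of radius comparable to $\rho$ at a time comparable to $t_0$; once that is in hand, a controlled number of applications of the expansion of positivity (Proposition~\ref{esp_positivita}) will transport it to $B_\rho(x_0)\times\{t_0+c_2\rho^2\}$. To locate such a ball I would first run the \emph{intrinsic point-picking} argument: fix a large exponent $\xi>0$ (to be chosen at the very end), and for $\tau\in[0,1)$ set $M_\tau:=\esssup_{B_{\tau\rho}(x_0)\times\{t_0\}}u$ and $N_\tau:=M(1-\tau)^{-\xi}$. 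By Theorem~\ref{Linfinity} the map $\tau\mapsto M_\tau$ is finite and bounded on $[0,1)$ while $N_\tau\uparrow\infty$, so (since $M_0=N_0=M$) there is a largest $\tau_0\in[0,1)$ with $M_{\tau_0}\ge N_{\tau_0}$. Set $\bar r:=\tfrac14(1-\tau_0)\rho$ and $\bar M:=M(1-\tau_0)^{-\xi}$, and pick a Lebesgue point $\bar x\in\overline{B_{\tau_0\rho}(x_0)}$ of $u(\cdot,t_0)$ with $u(\bar x,t_0)\gtrsim\bar M$. The maximality of $\tau_0$ gives $\esssup_{B_{2\bar r}(\bar x)\times\{t_0\}}u\lesssim\bar M$, and Theorem~\ref{Linfinity} upgrades this to control of $u$ by a fixed multiple of $\bar M$ on the full space--time cylinder $Q^-_{\bar r,\theta}(\bar x,t_0)$, not merely on the slice $t=t_0$.

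Next I would convert ``large at the point $\bar x$'' into genuine pointwise positivity on a whole ball. Since $u$ is of size $\bar M$ at $\bar x$ but bounded by a fixed multiple of $\bar M$ on $Q^-_{\bar r,\theta}(\bar x,t_0)$, the contrapositive of Proposition~\ref{prop-DeGiorgi1} (applied with $m_+$ and $\om$ of order $\bar M$ and with $a\sigma$ close to $1$, using the $C([0,T];L^2_{\mathrm{loc}})$ regularity to pass from the cylinder to the slice $t=t_0$) forces the level set $\{u>c\bar M\}$ to occupy at least a fixed fraction $\nu_+$ of $Q^-_{\bar r,\theta}(\bar x,t_0)$. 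Slicing in time, and simultaneously selecting a time at which the energy of $u(\cdot,t)$ is under control (which follows from \eqref{DGgamma} with $\alpha=1$ together with the upper bound on $u$), Lemma~\ref{lemmaMisVar} then produces a ball $B_{\eta\bar r}(x^\ast)\subset B_{\bar r}(\bar x)$ on which $u$ exceeds a fixed multiple of $\bar M$ on a $(1-\delta)$-portion, with $\delta$ at our disposal. Propagating this almost-everywhere bound forward in time on a slightly smaller ball by means of \eqref{DGgamma} (the mechanism already used in Lemmas~\ref{lemma1}--\ref{lemma2}) and then invoking Proposition~\ref{degiorgi-} with $m_-=0$, I would obtain $u\ge c_0\bar M$ $\mu$-a.e.\ on $B_{r_1}(x^\ast)\times(\bar t-\bar\theta r_1^2,\bar t\,]$ for suitable $r_1\simeq\bar r$, with $c_0$ depending only on the data and $\bar t$ within $O(\bar r^2)$ of $t_0$; choosing a time slice yields the whole-ball bound $u(\cdot,\bar t\,')\ge c_0\bar M$ $\mu$-a.e.\ on $B_{r_1}(x^\ast)$.

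Finally I would iterate Proposition~\ref{esp_positivita}. Starting from this bound at scale $r_1\simeq(1-\tau_0)\rho$, each application doubles the radius, advances time by an amount comparable to the square of the current radius, and multiplies the lower bound by a fixed $\lambda\in(0,1)$. After $k\simeq\log_2\frac{\rho}{r_1}\simeq\log_2\frac1{1-\tau_0}$ steps the resulting ball contains $B_\rho(x_0)$, the elapsed time is comparable to $\rho^2$ and can be tuned to equal exactly $c_2\rho^2$ using the freedom in the waiting parameter $\hat\theta$ and in the slice chosen inside each output interval, and the surviving constant is $\lambda^k c_0\bar M\simeq c_0\,M\,(1-\tau_0)^{\beta-\xi}$ with $\beta:=-\log_2\lambda>0$ depending only on the data. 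Fixing $\xi\ge\beta$ makes $(1-\tau_0)^{\beta-\xi}\ge1$, so $u\ge c_0M$ $\mu$-a.e.\ on $B_\rho(x_0)$ at time $t_0+c_2\rho^2$, which is the Harnack inequality with $c_1=c_0^{-1}$; the hypothesis $B_{5\Lambda\rho}(x_0)\times(t_0-\rho^2,t_0+5\rho^2)\subset\Omega\times(0,T)$ supplies exactly the room needed for all of the cylinders above. H\"older continuity then follows in the standard way by applying the inequality to $u-\essinf_Q u$ and to $\esssup_Q u-u$ (both in the relevant De Giorgi class, since $-u\in DG$ and $DG$ is invariant under adding constants) on a geometric sequence of cylinders, giving geometric decay of the oscillation with ratio $\tfrac{1-\gamma}{\gamma}$ and hence exponent $\alpha=-\log_2\tfrac{1-\gamma}{\gamma}$; the strong maximum principle is immediate from the inequality.

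The step I expect to be the main obstacle is the middle one: repeatedly passing between the \emph{time-slice} statements that Lemma~\ref{lemmaMisVar} produces and Proposition~\ref{esp_positivita} consumes and the \emph{space--time cylinder} statements required by Propositions~\ref{prop-DeGiorgi1}--\ref{degiorgi-}, while keeping \emph{every} constant ($\nu_+$, $\nu_-$, $\delta$, the waiting times $\theta,\bar\theta$, and in particular $\beta$) dependent only on $c_d$, $\gamma$ and the Poincar\'e data and uniform in $\tau_0$, so that the single free parameter $\xi$ can be fixed once and for all to absorb the loss $\lambda^k$ produced by the expansion-of-positivity iteration. The upper control on $u$ over the backward cylinder $Q^-_{\bar r,\theta}(\bar x,t_0)$ (as opposed to just the time slice $t=t_0$) via Theorem~\ref{Linfinity} is the other delicate point.
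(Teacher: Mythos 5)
Your outline follows the paper's proof almost step for step: DiBenedetto's point-picking with a free exponent $\xi$, the contrapositive of Proposition~\ref{prop-DeGiorgi1} to obtain the measure estimate \eqref{eq2.19}, the selection of a time at which both the measure of the super-level set and the energy of $u(\cdot,t)$ are controlled, Lemma~\ref{lemmaMisVar} to cluster the positivity, Proposition~\ref{degiorgi-} to convert it into a pointwise bound, and finally the iterated expansion of positivity (Proposition~\ref{esp_positivita}) with $\xi$ fixed at the end to absorb the loss $\lambda^k$. There is, however, one genuine gap, and it sits exactly where you flagged the second ``delicate point'': the upgrade from the time-slice bound $\esssup_{B_{2\bar r}(\bar x)\times\{t_0\}}u\lesssim\bar M$ to an upper bound on the \emph{backward} cylinder $Q^-_{\bar r,\theta}(\bar x,t_0)$. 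Theorem~\ref{Linfinity} bounds the supremum of $u$ over the \emph{later} portion $B_{R/2}\times((s_1+s_2)/2,s_2)$ of a cylinder by the $L^2$ mean over the whole cylinder $B_R\times(s_1,s_2)$; it only propagates information forward in time. A bound on the single slice $t=t_0$ gives no control on $u$ at times $t<t_0$ (the function may have a large spike in the past that has decayed by time $t_0$), so the claimed cylinder bound cannot be extracted this way; and without $\esssup_{Q^-_{\bar r,\theta}(\bar x,t_0)}u\lesssim\bar M$ you cannot feed Proposition~\ref{prop-DeGiorgi1} with $m_+$ and $\om$ of order $\bar M$, which is what the whole middle part of your argument rests on.

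The paper avoids this by running the point-picking directly on backward space--time cylinders rather than on time slices: it sets $\mathcal M(s)=\sup_{Q^-_s(x_0,t_0)}u$ and $\mathcal N(s)=(\rho-s)^{-\xi}$, takes $s_0$ the largest root of $\mathcal M=\mathcal N$, and chooses $(y_0,\tau_0)\in Q^-_{s_0}(x_0,t_0)$ with $\sup_{Q^-_{\rho_0/4}(y_0,\tau_0)}u>\tfrac34 M$, where $\rho_0=(\rho-s_0)/2$ and $M=(\rho-s_0)^{-\xi}$. The maximality of $s_0$ then gives $\sup_{Q^-_{\rho_0}(y_0,\tau_0)}u<\mathcal N((\rho+s_0)/2)=2^{\xi}M$ immediately, with no appeal to the $L^\infty$ estimate at all. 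An added benefit of picking inside the backward cylinder is that $\tau_0\le t_0$, which leaves room for the condition $\bar t+(\varepsilon\eta\rho_0/4)^2<t_0$ used later when launching the expansion of positivity. If you replace your slice-based $M_\tau$ by the cylinder-based $\mathcal M(s)$, the remainder of your argument goes through and coincides with the paper's Steps 1--5.
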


\begin{proof}
Suppose $t_0=0$; up to rescaling, we may write $u(x_0,0)=\rho^{-\xi}$ for
some $\xi>0$ to be fixed later. Define the
functions
\[
{\mathcal M}(s)=\sup_{Q_s^-(x_0,0)} u,\qquad
{\mathcal N}(s)=(\rho-s)^{-\xi}, \qquad s\in [0,\rho).
\]
Let us denote by $s_0\in [0,\rho)$ the largest solution
of ${\mathcal M}(s)={\mathcal N}(s)$.
Define
\[
M:={\mathcal N}(s_0)= (\rho-s_0)^{-\xi},
\]
and fix $(y_0,\tau_0)\in Q^-_{s_0}(x_0,0)$ in such a way that
\begin{equation}\label{choicey0t0}
\frac{3M}{4} < \sup_{Q^-_{\rho_0/4}(y_0,\tau_0)}u
\leq M,
\end{equation}
where $\rho_0=(\rho-s_0)/2$; this implies that
$Q^-_{\rho_0}(y_0,\tau_0)\subset Q^-_{(\rho+s_0)/2}(x_0,0)$,
as well as that
$$
\sup_{Q^-_{\rho_0}(y_0,\tau_0)} u\leq
\sup_{Q^-_{(\rho+s_0)/2}(x_0,0)} u
<{\mathcal N}\left(\frac{\rho+s_0}{2}\right)=2^\xi M.
$$

Let us divide the proof into five steps.

{\it Step 1.}
We assert that
\begin{equation}\label{eq2.19}
\mis \left( \left\{
(x,t)\in Q^-_{\rho_0/2}(y_0,\tau_0): u(x,t)> \frac{M}{2}
\right\}\right) > \nu_+ \mis \left(
Q^-_{\rho_0/2}(y_0,\tau_0)\right),
\end{equation}
where $\nu_+$ is the constant in Proposition~\ref{prop-DeGiorgi1}. To see this,
assume on the contrary that equation~\eqref{eq2.19} is not true. Then set
$k=2^\xi M$ and 
\[
m_+=\omega=k, \,\theta=1,
\,\rho=\frac{\rho_0}{2},\,\sigma=1-2^{-\xi-1},\, \textrm{ and }\,
a=\sigma^{-1}\biggl(1-\frac{3}{2^{\xi+2}}\biggr).
\]
We obtain from Proposition~\ref{prop-DeGiorgi1} that
\[
u\leq \frac{3M}{4} \quad \textrm{in }\, Q^-_{\frac{\rho_0}{4}}(y_0,\tau_0),
\]
which contradicts \eqref{choicey0t0}.

{\it Step 2.}
We show that there exists
$$
\bar t\in \left(\tau_0-\frac{\rho_0^2}4,\tau_0
-\frac{\nu_+}{8}\frac{\rho_0^2}{4}\right]
$$
such that
\begin{equation}\label{estnuM}
\mu \left(
\left\{
x\in B_{\rho_0/2}(y_0): u(x,\bar t)\geq \frac{M}{2}
\right\}
\right)> \frac{\nu_+}{2}\mu(B_{\rho_0/2}(y_0)),
\end{equation}
and
\begin{equation}\label{estnuM2}
\int_{B_{\rho_0/2}(y_0)} g_u^2(x,\bar t)\,d\mu(x)\leq \alpha
\frac{\mu(B_{\rho_0}(y_0))}{\rho_0^2}
k^2,
\end{equation}
for some sufficiently large $\alpha>0$. For this, we define the
sets $A(t)$, $I$, and $J_\alpha$ as follows
$$
A(t):=\left\{
x\in B_{\rho_0/2}(y_0): u(x,t)\geq \frac{M}{2}
\right\},
$$
$$
I:=\left\{
t\in (\tau_0-\frac{\rho_0^2}{4},\tau_0] : \mu(A(t))>
\frac{\nu_+}{2}\mu(B_{\rho_0/2}(y_0))
\right\},
$$
and
$$
J_\alpha :=\left\{
t\in (\tau_0-\frac{\rho_0^2}{4},\tau_0] : \int_{B_{\rho_0/2}(y_0)}
g_u^2(x,t)d\mu(x)\leq \alpha
\frac{\mu(B_{\rho_0}(y_0))}{\rho_0^2}
k^2
\right\}.
$$
From \eqref{eq2.19} we have that
\begin{align*}
\nu_+\mis (Q^-_{\rho_0/2}(y_0,\tau_0))<&
\int_{\tau_0-\rho_0^2/4}^{\tau_0} \mu(A(t))\,dt \\
& =
\int_I \mu(A(t))dt +\int_{(\tau_0-\rho_0^2/4]\setminus I} \mu(A(t))\,dt \\
& \leq
\mu(B_{\rho_0/2}(y_0))|I|+\frac{\nu_+}{2}
\mis (Q^-_{\rho_0/2}(y_0,\tau_0))\\
& =
\mis (Q^-_{\rho_0/2}(y_0,\tau_0)) \left(
|I| \left( \frac{4}{\rho_0^2}\right) +\frac{\nu_+}{2}
\right).
\end{align*}
This implies the following lower bound
$$
|I|\geq \frac{\nu_+\rho_0^2}{8}.
$$
On the other hand, if we apply \eqref{DGgamma} 
with $R=\rho_0$, $r=\rho_0/2$, $\alpha=0$, and $\theta=1$, we obtain
\begin{align}\label{est2.18}
\int_{Q^-_{\rho_0/2}(y_0,\tau_0)}g_u^2 \,d\mu\, dt &  =
\int_{Q^-_{\rho_0/2}(y_0,\tau_0)}g_{(u-k)_-}^2 \,d\mu\, dt \\
& \leq
\frac{8\gamma}{\rho_0^2}
\int_{Q^-_{\rho_0}(y_0,\tau_0)} (u-k)_-^2 \,d\mu\, dt
\leq
\frac{8\gamma k^2}{\rho_0^2} \mis (Q^-_{\rho_0}(y_0,\tau_0)) \nonumber \\
&=8\gamma k^2 \mu (B_{\rho_0}(y_0)). \nonumber
\end{align}
This estimate implies
\begin{align*}
4\gamma k^2\mu(B_{\rho_0}(y_0))
&\geq
\int_{(\tau_0-\rho_0^2/4,\tau_0]} \,dt
\int_{B_{\rho_0/2}(y_0)} g_u^2(x,t)\,d\mu \\
& \geq
\alpha \frac{\mu(B_{\rho_0}(y_0))}{\rho_0^2}k^2
\left( \frac{\rho_0^2}{4}-|J_\alpha|\right),
\end{align*}
which in turn gives us
$$
|J_\alpha|\geq \frac{\rho_0^2}{4}\left(1-\frac{16\gamma}{\alpha}\right).
$$
Choosing $\alpha=64\gamma/{\nu_+}$, we obtain
$$
|I\cap J_\alpha|=|I|+|J_\alpha|-|I\cup J_\alpha|
\geq \frac{\nu_+\rho_0^2}{16}.
$$
Then if we set
$$
T=\left(\tau_0-\frac{\rho_0^2}{4},\tau_0-\frac{\nu_+}{8}\frac{\rho_0^2}{4}\right],
$$
we get
$$
|I\cap J_\alpha\cap T|=|I\cap J_\alpha|+|T|-|(I\cap J_\alpha)\cup T|
\geq \frac{\rho_0^2}{4}\frac{\nu_+}{8},
$$
and in particular $I\cap J_\alpha\cap T\neq \emptyset$.

{\it Step 3.}
We fix $\bar t\in T$; by Lemma~\ref{lemmaMisVar}
we have that for any $\delta \in (0,1)$,
there exist $x^\ast\in B_{\rho_0/2}(y_0)$ and $\eta\in (0,1)$ such that
\begin{equation}\label{estMis}
\mu\left(\left\{u(\cdot,\bar t)>\frac{M}{4}\right\}
\cap B_{\eta \rho_0/2}(x^\ast)\right) >
(1-\delta) \mu(B_{\eta\rho_0/2}(x^\ast)).
\end{equation}

{\it Step 4.}
We show that for $\varepsilon>0$ to be fixed, there exists $\bar x$  such
that $Q^+_{\varepsilon\eta\rho_0/4}(\bar x,\bar t)\subset Q^-_{\rho_0}(y_0,\tau_0)$
and
\begin{equation}\label{condSmall}
\mis \left(
\left\{u\leq \frac{M}{8}\right\} \cap Q^+_{\varepsilon\eta\rho_0/4}(\bar x,\bar t)
\right)\leq
4^{N+1}(\gamma \varepsilon^2+\delta)
\mis \left(Q^+_{\varepsilon\eta\rho_0/4}(\bar x,\bar t)\right).
\end{equation}
Indeed, consider the cylinder
$$
Q=B_{\eta\rho_0/4}(x^\ast)\times (\bar t,\bar t+t^\ast]
$$
with $t^\ast=(\varepsilon\eta \rho_0/4)^2$. Using the energy estimate \eqref{DGgamma} on $Q$ with
$k=M/4$, $R=\eta\rho_0/2$, $r=R/2$ and $\alpha=1$, we obtain together
with \eqref{estMis} that for any $s\in (\bar t,\bar t+t^\ast]$
\begin{align*}
  & \int_{B_{\eta\rho_0/4}(x^\ast)}\biggl(u-\frac{M}{4}\biggr)^2_- (x,s)\, d\mu(x) \\
  & \leq \frac{16\gamma}{\eta^2\rho_0^2} \int_{\bar t}^{{\bar t}+t^\ast}
  dt \int_{B_{\eta\rho_0/2}(x^\ast)}\left(u-\frac{M}{4}\right)^2_-(x,t)
  \,d\mu(x) +
  \int_{B_{\eta\rho_0/2}(x^\ast)}\left(u-\frac{M}{4}\right)^2_-
  (x,\bar t)\, d\mu(x)\\
  & \leq \frac{M^2}{16}(\gamma\varepsilon^2 +\delta)
  \mu(B_{\eta\rho_0/2}(x^\ast)).
\end{align*}
Define
$$
B(t)=\left\{ x\in B_{\eta\rho_0/4}(x^\ast): u(x,t)\leq \frac{M}{8}\right\},
$$
and we have that
$$
\int_{B_{\eta\rho_0/4}(x^\ast)} \left(u-\frac{M}{4}\right)^2_-(x,s)d\mu(x)
\geq \int_{B(s)}\left(u-\frac{M}{4}\right)^2_-(x,s)d\mu(x)
\geq \frac{M^2}{64}\mu(B(s)).
$$
Putting the preceding two estimates together we arrive at
$$
\mu(B(s))\leq 4 (\gamma\varepsilon^2 +\delta) \mu(B_{\eta\rho_0/2}(x^\ast))
$$
for every $s\in (\bar t,\bar t+t^\ast]$. Integrating this inequality
over $s$ we obtain the estimate
$$
\mis \left(
\left\{ u\leq \frac{M}{8}\right\} \cap
Q^+_{\varepsilon\eta\rho_0/4}(x^\ast,\bar t)
\right)
\leq
2^{N+2} (\gamma \varepsilon^2+\delta)
\mis \left(Q^+_{\varepsilon\eta\rho_0/4}(x^\ast,\bar t)\right).
$$
We have to apply Proposition~\ref{prop-DeGiorgi1}; we then have that there exists 
$\bar x$ so that $Q^+_{\varepsilon\eta\rho_0/4}(\bar x,\bar t)\subset Q^-_{\rho_0}(y_0,\tau_0)$
satisying equation~\eqref{condSmall}. To see this, we take a disjoint family of balls
$\{B_{\varepsilon\eta\rho_0/4}(x_j)\}_{j=1}^m$
such that $B_{\varepsilon\eta\rho_0/4}(x_j)
\subset B_{\eta\rho_0/4}(x^\ast)$ for every $j=1,\ldots,m$,
and
$$
B_{\eta\rho_0/4}(x^\ast)\subset
\bigcup_{j=1}^m B_{\varepsilon\eta\rho_0/2}(x_j).
$$
Given this disjoint family, there exists $j_0$ such that
\eqref{condSmall} is satisfied with $\bar x=x_{j_0}$. Otherwise we
would get a contradiction summing over $j=1,\ldots,m$.

{\it Step 5.}
Due to our construction, we are able to state
\[
\osc_{Q^+_{\varepsilon\eta\rho_0/4}(\bar x,\bar t)} u\leq k
=2^\xi M.
\]
We also have that if $\bar s=\bar t+(\varepsilon \eta\rho_0/4)^2$ then
$Q^+_{\varepsilon\eta\rho_0/4}(\bar x,\bar t)
=Q^-_{\varepsilon\eta\rho_0/4}(\bar x,\bar s)$; we apply
Proposition~\ref{degiorgi-} with
$$
\rho=\frac{\varepsilon\eta\rho_0}{4},\quad \theta=1,\quad m_-=0, \quad
\omega=k,\quad a=\frac{1}{2}, \quad \sigma=2^{-\xi-3},
$$
so we can deduce that there exists $\nu_->0$ such that if
\begin{equation}\label{condPropDeGiorgi1}
\mis \biggl(
\biggl\{u\leq \frac{M}{8}\biggr\}\cap Q^-_{\varepsilon\eta\rho_0/4}(\bar x,\bar s)
\biggr)\leq \nu_-\mis (Q^-_{\varepsilon\eta\rho_0/4}(\bar x,\bar s))
\end{equation}
then
$$
u(x,t)\geq \frac{M}{16}, \qquad \mis-\textrm{a.e. in}\ Q^-_r(\bar x,\bar s),
$$
where $r=\varepsilon\eta\rho_0/8$.

Fix $\varepsilon$ and $\delta$ in \eqref{condSmall} small enough so
that \eqref{condPropDeGiorgi1} is satisfied and $\bar t+(\varepsilon
\eta \rho/4)^2<0$.  With this choice of $\delta$, we obtain the
constants $\eta$ and $r$ that depend only on
$\delta$. Expansion of positivity, Proposition~\ref{esp_positivita}, implies
$$
u(x,t)\geq \lambda \frac{M}{16},
$$
for all $x\in B_{2r}(\bar x)$ and $t\in [\hat t+\hat \theta r^2,\hat
t+\tilde\theta r^2]$ for some $\hat t\in (\bar t,\bar
t+(\varepsilon\eta\rho_0/4)^2]$, where $\tilde\theta$ depends only on
$\gamma$, whereas $\lambda$ depends on $\gamma$ and $\hat \theta\in
(0,\tilde\theta)$ that we shall fix later. We can repeat the argument 
with $r$ replaced by $2r$ and initial time varying in the interval
$[\hat t+\hat \theta r^2,\hat t+\tilde\theta r^2]$ to
obtain the following estimate
$$
u(x,t)\geq \lambda^2 \frac{M}{16},
$$
for all $x\in B_{4r}(\bar x)$ and $t\in [\hat t+5\hat \theta
r^2,\hat t+5\tilde\theta r^2]$.  Thus iterating this procedure, we can show by
induction that for any $m\in \N$
\begin{equation}\label{estimateM}
u(x,t)\geq \lambda^m \frac{M}{16},
\end{equation}
for all $x\in B_{2^mr}(\bar x)$ and $t\in [s_m,t_m]$, where
$$
s_m=\hat t+\hat \theta r^2 \frac{4^m-1}{3}
\quad\text{and}\quad
t_m=\hat t+\tilde\theta r^2\frac{4^m-1}{3}.
$$
We fix $m$ in such a way that $2\rho < 2^m r\leq 4\rho$;
since $\bar x\in B_{\rho}(x_0)$, we then have the inclusion
$B_{\rho}(x_0)\subset B_{2^mr}(\bar x)$. Recalling that 
$r=\varepsilon\eta(\rho-s_0)/16$, we obtain
\begin{align*}
  (\rho-s_0)^{-\xi}=& \left(\frac{2^4}{\varepsilon \eta} r
  \right)^{-\xi} =\frac{(\varepsilon \eta)^\xi}{2^{4\xi}r^\xi} \geq
  (\varepsilon \eta)^\xi 2^{(m-6)\xi}\rho^{-\xi}.
\end{align*}
Hence equation \eqref{estimateM} can be rewritten as follows
$$
u(x,t)\geq \lambda^m \frac{M}{16}=\lambda^m \frac{(\rho-s_0)^{-\xi}}{16}
\geq (2^\xi\lambda)^m (\varepsilon\eta)^\xi 2^{-6\xi-4}\rho^{-\xi}
=(2^\xi\lambda)^m (\varepsilon\eta)^\xi 2^{-6\xi-4}u(x_0,0).
$$
for any $x\in B_{\rho}(x_0)$ and $t\in [s_m,t_m]$.

We now fix $c_2>0$ and choose
$\hat \theta$ in such a way that $\frac{16}{3}\hat
\theta<c_2$.  With this choice, since $2^mr\leq 4\rho$, we have
\begin{equation}\label{eqsmc2}
s_m\leq\hat \theta r^2 \frac{4^m}{3}\leq \hat \theta \frac{16}{3}\rho^2<c_2\rho^2.
\end{equation}
Once $\hat \theta$ has been fixed, we have $\lambda$;
we now fix $\xi=-\log_2 \lambda$. With these choices
also the radius $r$ is fixed and so $m$ is chosen in such a way that
$$
1-\log_2 r\leq m\leq 2-\log_2 r.
$$
We draw the conclusion that
$$
u(x,t)\geq c_0 u(x_0,0)
$$
with $c_0:=(\varepsilon \eta)^\xi 2^{-6\xi-4}$ for all $x\in B_{\rho}(x_0)$ and $t\in [s_m,t_m]$.

Notice that by \eqref{eqsmc2} we have got two alternatives. Either
$c_2\rho^2\in [s_m,t_m]$ or $c_2\rho^2>t_m$. In the former case, the
proof is completed by taking $c_1=c_0^{-1}$. Whereas in the latter
case, we can select $\tilde t\in [s_m,t_m]$ such that
$$
u(x,\tilde t)\geq c_0 u(x_0,0)
$$
for all $x\in B_{\rho}(x_0)$.
We can assume that $\hat \theta$ is small enough such that $\tilde t+\hat \theta\rho^2
<c_2\rho^2$.
By expansion of positivity, Proposition \ref{esp_positivita}, we then obtain that
$$
u(x,t)\geq \lambda c_0 u(x_0,0)
$$
for all $x\in B_{2\rho}(x_0)$ and $t\in [\tilde
t+\hat\theta\rho^2,\tilde t+\tilde\theta\rho^2]$.  If $c_2\rho^2<\tilde
t+\tilde\theta\rho^2$, then the proof is completed by selecting $c_1=(\lambda
c_0)^{-1}$. If this was not the case, we could restrict the previous
inequality on $B_{\rho}(x_0)$, and so iterating the procedure, adding
the condition that $\hat \theta\leq \tilde\theta$, using the fact that the
estimate is already true on $[\tilde t+\hat\theta\rho^2,\tilde
t+\tilde\theta\rho^2]$,
$$
u(x,t)\geq \lambda^2 c_0 u(x_0,0)
$$
for each $x\in B_{\rho}(x_0)$ and $t\in [\tilde
t+\hat\theta\rho^2,\tilde t+2\tilde\theta\rho^2]$.  By induction, if $k$
is an integer such that $\tilde t+k\tilde\theta\rho^2\geq c_2\rho^2$, then
$$
u(x,t)\geq \lambda^k c_0 u(x_0,0)
$$
for every $x\in B_{\rho}(x_0)$ and $t\in [\tilde
t+\hat\theta\rho^2,\tilde t+k\tilde\theta\rho^2]$. It is crucial to select
such an index $k$ which depends only on the class and not on the
function. We then take $k$ in such a way that $\tilde t+k\tilde\theta\geq
c_2\rho^2$. As $-\rho^2\leq \tilde t\leq c_2\rho^2$ the index $k$ has
to be chosen in such a way that both $1+c_2\leq k\tilde\theta$ and $\tilde
t+k\tilde\theta$ remains in the domain of reference. Notice that $1+c_2\leq
2$. Hence there exists $k$ such that $2\leq k\tilde\theta\leq 3$, and we are
done with the proof.
\end{proof}

\end{document}